\documentclass[preprint]{elsarticle}

\usepackage{lineno,hyperref}
\modulolinenumbers[5]

%
\usepackage{graphicx}
\usepackage{amssymb}
\usepackage{amsmath}
\usepackage{algorithmicx}
\usepackage[ruled]{algorithm}
\usepackage{algpseudocode}

\date{June 19, 2020}

\ifpdf
\DeclareGraphicsExtensions{.eps,.pdf,.png,.jpg}
\else
\DeclareGraphicsExtensions{.eps}
\fi

\def\ba{\mathbf{a}}

\def\bd{\mathbf{d}}

\def\bg{\mathbf{g}}

\def\bs{\mathbf{s}}

\def\bx{\mathbf{x}}
\def\by{\mathbf{y}}

\newcommand{\N}{\mathbb{N}} 
\renewcommand{\Re}{\mathbb{R}}

\renewcommand{\tilde}{\widetilde}
\renewcommand{\bar}{\overline} 

\newtheorem{theorem}{Theorem}

\newtheorem{remark}{Remark}
\newenvironment{proof}{\vspace*{-5mm}\paragraph{\textnormal{\emph{Proof}}}}{\hfill$\square$ \par\medskip\smallskip}

\begin{document}
    
    \begin{frontmatter}
        
        \title{Using gradient directions to get global convergence of Newton-type methods}
        
        \author[1]{Daniela di Serafino\corref{cor1}}\ead{daniela.diserafino@unicampania.it}
        \author[2]{Gerardo Toraldo}\ead{toraldo@unina.it}
        \author[1]{Marco Viola}\ead{marco.viola@unicampania.it}
        
        \address[1]{Dipartimento di Matematica e Fisica, Universit\`a degli Studi della Campania \\ ``Luigi Vanvitelli'', viale A.~Lincoln~5, 81100 Caserta, Italy}
        \address[2]{Dipartimento di Matematica e Applicazioni, Universit\`a degli Studi di Napoli Federico II, Cupa Nuova Cintia~21, 80126 Napoli, Italy}
        
        \begin{abstract}
            The renewed interest in Steepest Descent (SD) methods following the work of
            Barzilai and Borwein \cite{barzilai_borwein_88} has driven us to consider a globalization strategy based on SD,
            which is applicable to any line-search method.
            In particular, we combine Newton-type directions
            with scaled SD steps to have suitable descent directions.
            Scaling the SD directions with a suitable step length makes
            a significant difference with respect to similar globalization approaches,
            in terms of both theoretical features and computational behavior. We apply our strategy to Newton's method
            and the BFGS method, with computational results that appear interesting
            compared with the results of well-established globalization strategies devised ad hoc for those methods. \\
            
            \noindent
            \emph{AMS subject classification:}  65K05, 90C30, 49M15.
            \begin{keyword}
                Newton-type methods, globalization strategies, steepest descent step. \\
            \end{keyword}
        \end{abstract}
        
        
    \end{frontmatter}
    \section{Introduction\label{sec:intro}}
    We are concerned with the following optimization problem:
    \vspace{-2pt}
    \begin{equation}\label{the problem}
    \mathrm{minimize} \, f(\bx), \;\; \bx \in \Re^n,
    \end{equation}
    where $f : \Re^n \to \Re$ is twice continuously differentiable. Hereafter $\bg(\bx)$ and $H(\bx)$
    denote the gradient and the Hessian of $f$, respectively.
    
    A strictly monotone line-search method for solving \eqref{the problem} generates a sequence $\{ \bx_{k} \}$ as follows:
    \vspace{-3pt}
    \begin{equation}
    \bx_{k+1}=\bx_{k}+\alpha_k\bd_k, 
    \label{linesearch_iteration}
    \end{equation}
    where $\bd_k$ is a descent direction, $\alpha_k>0$ is a step length and $f(\bx_{k+1}) < f(\bx_{k})$.
    For simplicity of notation, we define $f_k = f(\bx_k)$, $\bg_k = \bg(\bx_k)$ and $H_k = H(\bx_k)$.
    The direction $\bd_k$ must be of strict descent, i.e.,
    \begin{equation}
    \label{descent_condition}
    \bg_k^\top \bd_k <0.
    \end{equation}
    However, condition \eqref{descent_condition} alone does not ensure
    convergence, and $\bd_k$ must satisfy, e.g., the angle criterion
    \begin{equation}
    \label{angle_criterion}
    \cos \langle -\bg_k, \bd_k \rangle = \frac{- \bg_k^\top  \bd_k}{\|\bg_k\|\,\|\bd_k\|} \geq \varepsilon_k, \quad \varepsilon_k > 0,
    \end{equation}
    where the sequence $\{\varepsilon_k\}$ is bounded away from 0, which means that the angle between the search direction
    and the Steepest Descent (SD) direction must be bounded away from the right angle.
    
    The step length must usually satisfy the Armijo condition
    \begin{equation}
    f(\bx_{k}+\alpha_k\bd_k)-f(\bx_{k}) \leq
    \sigma_1\alpha_k \bg_k^\top \bd_k, \quad \sigma_1 \in (0,1),\label{armijo} 
    \end{equation}
    or the Wolfe conditions, i.e., \eqref{armijo} and
    \begin{equation}
    \nabla f(\bx_{k}+\alpha_k\bd_k)^\top \bd_k
    \geq \sigma_2\alpha_k \bg_k^\top \bd_k, \quad \sigma_2 \in (\sigma_1,1).\label{wolfe}
    \end{equation}  
    We note that \eqref{armijo} forces a \textit{sufficient decrease} in the objective function, while the \textit{curvature} condition~\eqref{wolfe} prevents 
    the method from taking too small steps, which is not guaranteed by condition~\eqref{armijo} alone.
    This drawback can be avoided by choosing $\alpha_k$ with a suitable backtracking procedure~\cite[page~37]{nocedal_06}.
    
    In a Newton-type (NT) method,
    the search direction $\bd_k^{\mathrm{NT}}$ is computed as the solution of the linear system
    \begin{equation}
    S_k\bd =-\bg_k
    \label{scaled_gradient}
    \end{equation}
    where $S_k$ is some symmetric matrix, possibly positive definite so that~\eqref{descent_condition} automatically holds.
    With the choice $S_k=I$, where $I$ is the identity matrix, the NT method reduces to the classical SD method,
    which is globally convergent with at most linear rate.
    Recently, several attempts have been made to get more efficient SD methods.
    In particular, starting from the seminal work by Barzilai and Borwein (BB)~\cite{barzilai_borwein_88},
    it has been observed that appropriate choices of $\alpha_k$ can, to some extent,
    remedy the slow convergence of the SD method, even for the solution of constrained problems by gradient projection strategies.
    This led to effective algorithms~\cite{crisci_2019,diserafino_2018,diserafino_2018siopt,porta_2015,pospisil_2018}, which have been
    successfully used in several applications~\cite{antonelli_2016,deasmundis_2016,diserafino_2020,dostal_18,zanella_2009}.
    
    With the inclusion of second-order information through $S_k$ we expect a better rate of convergence. However, the search direction
    $\bd_k^{\mathrm{NT}}$ does not guarantee global convergence, even if $S_k$ is positive definite for each $k$. 
    An example is provided by the classical Newton's method, where $S_k= H(\bx_k)$. If $\bx^*$ solves~\eqref{the problem}, with $H(\bx^*)$ positive definite, and $H(\bx)$ is Lipschitz continuous around $\bx^*$, Newton's method
    has local, but not global, convergence with quadratic rate~\cite{fletcher_2000}. However, a suitable reduction of the Newton step
    allows global convergence in the convex case (see, e.g., \cite[page~34]{nesterov_04}).
    In the nonconvex case the Newton direction 
    may not be a descent direction.  Therefore, modifications of Newton's method
    have been developed that replace $H(\bx_k)$ by $\tilde{H}_k=H(\bx_k) + E_k$, where $E_k$ is a symmetric matrix such that
    $ \tilde{H}_k $ is positive definite~\cite{more_84} and the solution $\bd_k^{\mathrm{MN}}$ of $\tilde H_k \bd=-\bg_k$ is a
    descent direction at $\bx_k$. We will refer to these methods as Modified Newton's (MN) methods.
    This approach can be extended to the general framework of Newton-type methods, in which, given an approximation $S_k$ of $H(\bx_k)$,
    one may consider a matrix $E_k$ such that $\tilde{S}_k = S_k+E_k$ is ``sufficiently positive definite'' and $\| E_k \|$ is not much larger
    than $\inf\{\|E\| \, : \, S_k +E \succ 0 \} $ for some norm~\cite{fang_08}.
    If the eigenvalues of  $\tilde{S}_k$ are bounded away from zero independently of $k$ and the Armijo condition is satisfied by backtracking,
    then all limit points of the method using the directions obtained by solving~\eqref{scaled_gradient} with $S_k = \tilde{S}_k$
    are stationary for \eqref{the problem}\cite{bertsekas_99,murray_2011}.
    The most successful and well-established algorithms for the computation of $E_k$ are based on
    modified Cholesky factorizations of the matrix $H(\bx_k)$~\cite{fang_08}.
    Another possibility is setting $E_k = \lambda_k I$, where $\lambda_k > 0$ is a suitable constant, so that the search direction is
    \begin{equation}
    \label{NMdirection}
    \bd_k = -\left(S_k+\lambda_k I\right)^{-1}\bg_k.
    \end{equation}
    
    For quasi-Newton methods ad-hoc globalization strategies have been proposed which avoid matrix factorizations. Next, we briefly
    describe some of them for the BFGS method~\cite{fletcher_2000}. In this case
    \begin{equation}
    \bx_{k+1}=\bx_{k}+ \bd_k, 
    \label{BFGS_iteration}
    \end{equation}
    with $\bd_k$  solution of the system
    \begin{equation}
    B_k \bd=-\bg_k, 
    \label{BFGS_system}
    \end{equation}
    where the matrix $B_k\in \Re^{n \times n}$ is updated by the formula
    \begin{equation}
    B_{k+1} = B_k -\frac{B_k \bs_k \bs_k^\top  B_k}{\bs_k^\top B_k \bs_k}+\frac{\by_k \by_k^\top}{\by_k^\top \bs_k},
    \label{BFGS_update}
    \end{equation}
    with $\by_k=\bg_{k+1}-\bg_{k}$ and $\bs_k=\bx_{k+1}-\bx_{k}$.
    
    For convex optimization problems it can be shown that, under suitable hypotheses, the BFGS method with
    a line search satisfying the Wolfe conditions
    is globally convergent and locally superlinearly convergent~\cite{byrd_1987}.
    For nonconvex functions Dai~\cite{dai_2003} showed with an example that
    the BFGS with Wolfe line search may fail. Later on Mascharenhas~\cite{mascarenhas_04} showed that the BFGS method,
    as well as other methods in the Broyden class, may fail for nonconvex objective functions when an exact line search is used.
    
    For nonconvex minimization problems Li and Fukushima \cite{li_01} proposed a modified version of BFGS, called
    MBFGS, using an Armijio line search or a Wolfe one, and based on an update formula for the matrix in~\eqref{BFGS_system},
    which is equal to~\eqref{BFGS_update} with $\by_k$ replaced by 
    \begin{equation}
    \bar{\by}_k = \bg_{k+1}-\bg_{k}+\gamma_k\|\bg_k\|\bs_k,
    \end{equation}
    where
    \begin{equation}
    \gamma_k= 1 + \max
    \left\lbrace -\frac{\by_k^\top \bs_k}{\|\bs_k\|^2},0 \right\rbrace. 
    \label{MBFGS}
    \end{equation}
    This update formula guarantees that~\cite[Section~5]{li_01}
    $$ 
    \bar{\by}_k^\top \bs_k > \| \bg_k\|\,\|\bs_k\|^2,
    $$
    and therefore
    $B_{k+1}$ is positive definite, provided $B_k$ is positive definite, thus ensuring that the descent condition
    $\bg_{k+1}^\top \bd_{k+1} < 0$ holds. The update formula~\eqref{MBFGS} was inspired by the MN method
    with search direction
    \begin{equation}
    \label{MNdirection}
    \bd_k^{\mathrm{MN}} = -(H_k + \lambda_k I)^{-1}\bg_k.
    \end{equation}
    where $\lambda_k$ is a regularization parameter. The MBFGS method with Armijo or Wolfe line search is globally convergent even
    for nonconvex problems~\cite{li_01}. 
    
    Later on, the same authors proposed the following BFGS formula with \textit{cautious update rule}~\cite{li_01a}:
    \begin{equation}
    B_{k+1}= \left\lbrace
    \begin{array}{ll}
    B_{k+1} = B_k -\frac{B_k \bs_k \bs_k^\top  B_k}{\bs_k^\top B_k \bs_k}+\frac{\by_k \by_k^\top}{\by_k^\top \bs_k}, & \;\, \textrm{if} \; \frac{\by_k^\top \bs_k}{\| \bs_k\|^2 } >\chi \| \bg_k\|^\upsilon,\\
    B_k, & \; \textrm{otherwise,} 
    \end{array}
    \right.
    \label{CBFGS}
    \end{equation}
    where $\chi$ and $\upsilon$ are positive constants, and for which global convergence was proved without convexity assumptions.
    
    In this paper we consider a globalization approach applicable to any Newton-type method. The basic idea consists
    of linearly combining the NT and SD directions. The goal is to bring the iterates sufficiently 
    close to a solution through the globally convergent SD method, so that once the iterates are in the basin of attraction
    of the NT method, it can lead to faster convergence than SD. Although this approach is not new
    (see, e.g., \cite{shi_1996,shi_2000,han_2003} and~\cite[Section~1.4.4]{bertsekas_99}) and much simpler
    than those based on trust regions and incomplete factorizations,
    it has been little utilized, likely because of little confidence in the SD method.
    Here we show that a hybrid strategy which combines SD and NT directions can give very interesting numerical results.
    In our opinion, a suitable scaling of the SD direction is a key issue in making this approach effective. We also find that,
    besides fostering global convergence, this strategy can be effective in speeding up NT methods.
    
    This article is organized as follows. In Section~\ref{sec:SDG} we present our globalization strategy,
    and how the coefficient governing the linear combination can be computed.
    Section~\ref{sec:convergence} deals with the convergence of the resulting algorithm, in particular for Newton's method.
    In Section~\ref{sec:experiments} we discuss results of numerical experiments carried out with our algorithm using either
    Newton's or the BFGS method, including a comparison with some benchmarks algorithms. We conclude
    in Section~\ref{sec:conclusions}.
    
    \section{Globalization strategy\label{sec:SDG}}
    We propose a line-search method of the form~\eqref{linesearch_iteration}, where $\bd_k$ is the NT direction
    if the angle criterion~\eqref{angle_criterion} is satisfied, otherwise
    \begin{equation} \label{new_scaled_gradient}
    \bd_k = \beta_k\,\bd_k^{\mathrm{NT}}-(1- \beta_k)\,\xi_k\,\bg_k,
    \end{equation}
    with $0 \le \beta_k \le 1$ and $\xi_k > 0$.
    A sketch of this method, which we call \emph{SD Globalized (SDG) line-search method}, is provided in Algorithm~\ref{SDG}.
    
    \begin{algorithm}[t!]
        \caption{SD Globalized (SDG) line-search method\label{SDG}}
        \begin{algorithmic}[1]
            \State choose $\bx_0 \in \Re^n$,  $ \{\varepsilon_k\}_{k\in\N} \subset (0,\,1) $, $\sigma_1 \in (0,1/2)$;
            \For {$k=0,1,2,\ldots$}
            \State compute $S_k$;
            \State ${\bd}_k^{\mathrm{NT}}= -S_k^{-1}\bg_k$;
            \If {$ \cos\left\langle \bd_k^{\mathrm{NT}}, -\bg_k \right\rangle \geq \varepsilon_k$}
            \State $\bd_k={\bd}_k^{\mathrm{NT}}$;
            \Else
            \State compute the step length $\xi_k$;
            \State {find $\beta_k$ such that $\cos \left\langle \beta_k \bd_k^{\mathrm{NT}}-(1-\beta_k) \xi_k\,\bg_k, -\bg_k\right\rangle \geq \varepsilon_k$; \label{Alg_SDG_find_beta}}
            \State $\bd_k= \beta_k\,\bd_k^{NT}-(1- \beta_k)\,\xi_k\,\bg_k$;
            \EndIf
            \State select $\alpha_k$ satisfying \eqref{armijo} by backtracking, starting from $ \alpha_k=1$;
            \label{Alg_NSD_backtracking_alpha}	
            \State $\bx_{k+1}=\bx_{k} + \alpha_k \bd_k$;
            \label{Alg_NSD_Armijo_LS}
            \EndFor
        \end{algorithmic}
    \end{algorithm}
    
    Regarding the NT direction~\eqref{scaled_gradient}, which can be, e.g., a Newton, quasi-Newton or inexact Newton direction, we assume that 
    it is well-scaled, so that we scale only the SD direction through $\xi_k$.
    The search direction~\eqref{new_scaled_gradient} is closely related to the one proposed by Shi in \cite{shi_1996,shi_2000}, which is defined as
    \begin{equation}
    \label{shi_direction}
    \bd_k = \beta_k\,\bd_k^{\mathrm{NT}}-(1- \beta_k)\,\bg_k.
    \end{equation}
    Notice that, unlike~\eqref{shi_direction}, the search direction~\eqref{new_scaled_gradient} is invariant to the scaling of the objective function,
    as long as $\xi_k\,\bg_k$ is invariant (e.g., when $\xi_k$ is a BB step length and $\xi_0 = 1 / \| \bg_0 \|$).
    
    The next theorem shows how to compute values of $\beta_k$ guaranteeing that~\eqref{new_scaled_gradient} satisfies~\eqref{angle_criterion}.
    We note that the first part slightly generalizes Lemma~2.1 in~\cite{shi_1996}.
    \begin{theorem}\label{thm:beta_opt_subopt}
        Let us consider any $\bx_k \in \Re^n$ and $\varepsilon_k \in (0,1)$, and assume that
        \begin{equation}
        \label{nodescent}
        -\bg_k^\top \bd^{\mathrm{NT}}_k
        < \varepsilon_k \|\bg_k \|\,\|\bd^{\mathrm{NT}}_k\|,
        \end{equation}
        with $\bd^{\mathrm{NT}}_k$ solution of system~\eqref{scaled_gradient}
        where $S_k$ is any symmetric matrix not multiple of $I$. 
        \begin{itemize}
            \item[i)] Let $\beta_k^\varepsilon$ be the smallest root in $(0,1)$ of the polynomial 
            \begin{equation}
            \label{shi_equation1}
            P_k(\beta)= A_k \beta^2+B_k \beta + C_k,
            \end{equation} 
            where
            \begin{eqnarray}
            A_k &= & 
            \left(\bg_k^\top \bd^{\mathrm{NT}}_k\right)^2 - \varepsilon_k^2 \| \bg_k\|^2\,\|\bd_k^{\mathrm{NT}}\|^2 - B_k - C_k, \nonumber\\
            B_k &= & -2 \left(1-\varepsilon_k^2\right)\xi_k \left\|\bg_k\right\|^2\left(\xi_k \left\|\bg_k\right\|^2 + \bg_k^\top \bd^{\mathrm{NT}}_k\right), \nonumber\\
            C_k &= & \left(1-\varepsilon_k^2\right) \xi_k^2 \left\|\bg_k\right\|^4. \nonumber
            \end{eqnarray}
            If  $\bd_k$ is defined according to \eqref{new_scaled_gradient} with $\beta_k=\beta_k^\varepsilon$, then 
            $$
            -\bg_k^\top \bd_k = \varepsilon_k \|\bg_k \|\,\|\bd_k\|.
            $$
            
            \item[ii)] Let $\bd_k$ be defined according to \eqref{new_scaled_gradient}. Then 
            \begin{equation}\label{angle_criterion_step_k}
            -\bg_k^\top \bd_k \geq \varepsilon_k \|\bg_k \|\,\|\bd_k\|
            \end{equation}
            if and only if $\beta_k\leq\beta_k^\varepsilon$.
            \item[iii)] A lower bound for $ \beta_k^\varepsilon$ is provided by the value $ \hat{\beta}_k$ defined as
            \begin{equation}\label{def:hat_beta_k}
            \hat{\beta}_k= \dfrac{\rho_k}{\rho_k+\pi_k},
            \end{equation}
            where
            \begin{equation}
            \rho_k=\xi_k(1-\varepsilon_k)\quad  \mathrm{and} \quad
            \pi_k = \frac{\bg_k^\top \bd^{\mathrm{NT}}_k}{\|\bg_k\|^2} +
            \varepsilon_k \frac{ \|\bd^{\mathrm{NT}}_k\|}{\|\bg_k\|}.
            \label{def_rho_pi}
            \end{equation}
        \end{itemize}
        
    \end{theorem}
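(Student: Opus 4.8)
The plan is to reduce everything to a one-dimensional question in the scalar $\beta$, working with the combined direction $\bd_k(\beta) = \beta\,\bd_k^{\mathrm{NT}} - (1-\beta)\,\xi_k\,\bg_k$ of \eqref{new_scaled_gradient}, which is an \emph{affine} function of $\beta$. The central object is the scalar function
$$\phi_k(\beta) = -\bg_k^\top \bd_k(\beta) - \varepsilon_k\|\bg_k\|\,\|\bd_k(\beta)\|,$$
whose nonnegativity is exactly the angle criterion \eqref{angle_criterion_step_k}. First I would record the algebraic identity, obtained by expanding $\|\bd_k(\beta)\|^2$ and $\bigl(\bg_k^\top\bd_k(\beta)\bigr)^2$ and collecting powers of $\beta$, that
$$P_k(\beta) = \bigl(\bg_k^\top\bd_k(\beta)\bigr)^2 - \varepsilon_k^2\|\bg_k\|^2\,\|\bd_k(\beta)\|^2,$$
where $P_k$ is the polynomial \eqref{shi_equation1}; matching the coefficients of $\beta^0,\beta^1,\beta^2$ against $C_k,B_k,A_k$ is routine, the only mild care being that $A_k$ is written implicitly through $B_k$ and $C_k$. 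This identity factors as $P_k(\beta) = \phi_k(\beta)\,\bigl(-\bg_k^\top\bd_k(\beta) + \varepsilon_k\|\bg_k\|\,\|\bd_k(\beta)\|\bigr)$, which is the bridge between the polynomial and the angle criterion.

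For parts (i)--(ii) the key observation is that $\phi_k$ is \emph{concave}: the term $-\bg_k^\top\bd_k(\beta)$ is affine, while $\beta\mapsto\|\bd_k(\beta)\|$ is convex (a norm composed with an affine map), so subtracting a positive multiple of it preserves concavity. I would then evaluate the endpoints: $\phi_k(0) = (1-\varepsilon_k)\,\xi_k\|\bg_k\|^2 > 0$, whereas assumption \eqref{nodescent} gives directly $\phi_k(1) = -\bg_k^\top\bd_k^{\mathrm{NT}} - \varepsilon_k\|\bg_k\|\,\|\bd_k^{\mathrm{NT}}\| < 0$. A concave function positive at $0$ and negative at $1$ has a \emph{unique} zero $\beta_k^\varepsilon\in(0,1)$, with $\{\phi_k\ge 0\}\cap[0,1] = [0,\beta_k^\varepsilon]$; this is precisely (ii). At $\beta_k^\varepsilon$ we have $\phi_k=0$, i.e. $-\bg_k^\top\bd_k = \varepsilon_k\|\bg_k\|\,\|\bd_k\|$, which is (i). Finally, on $[0,\beta_k^\varepsilon)$ concavity forces $-\bg_k^\top\bd_k(\beta) > \varepsilon_k\|\bg_k\|\,\|\bd_k(\beta)\| \ge 0$, so the second factor above is strictly positive there and $P_k > 0$; hence $\beta_k^\varepsilon$ is indeed the smallest root of $P_k$ in $(0,1)$, reconciling the concavity picture with the polynomial description in (i).

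For part (iii) I would produce a \emph{sufficient} condition for the angle criterion by bounding $\|\bd_k(\beta)\|$ from above. For $\beta\in[0,1]$ the triangle inequality gives $\|\bd_k(\beta)\| \le \beta\|\bd_k^{\mathrm{NT}}\| + (1-\beta)\,\xi_k\|\bg_k\|$, so it is enough to require
$$-\bg_k^\top\bd_k(\beta) \ge \varepsilon_k\|\bg_k\|\bigl(\beta\|\bd_k^{\mathrm{NT}}\| + (1-\beta)\,\xi_k\|\bg_k\|\bigr).$$
Both sides are affine in $\beta$; dividing through by $\|\bg_k\|^2$ and regrouping turns this into $\rho_k \ge \beta(\rho_k+\pi_k)$, with $\rho_k,\pi_k$ exactly as in \eqref{def_rho_pi}. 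Here assumption \eqref{nodescent} is used a second time: it is equivalent to $\pi_k>0$, so $\rho_k+\pi_k>0$, the inequality reads $\beta\le\hat\beta_k$ with $\hat\beta_k=\rho_k/(\rho_k+\pi_k)\in(0,1)$ as in \eqref{def:hat_beta_k}, and every such $\beta$ satisfies the angle criterion. By (ii) the criterion holds exactly for $\beta\le\beta_k^\varepsilon$, whence $\hat\beta_k\le\beta_k^\varepsilon$.

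The step I expect to be the main obstacle is making (i)--(ii) rigorous: the naive route of squaring the angle criterion and studying the roots of $P_k$ loses the sign of $-\bg_k^\top\bd_k$ and can admit spurious roots where $\bd_k(\beta)$ points ``uphill'' yet $P_k\ge 0$. The concavity of $\phi_k$ is what removes this difficulty cleanly, and it also yields uniqueness of the root in $(0,1)$ without any case analysis on the sign of $A_k$ or on the relative position of the roots. One should also note that the hypothesis that $S_k$ is not a multiple of the identity is what keeps $\bd_k(\beta)$ from vanishing on the relevant range, so that the cosine in \eqref{angle_criterion_step_k} stays well defined throughout.
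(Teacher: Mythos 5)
Your proof is correct, but the engine driving items i)--ii) is genuinely different from the paper's. The paper works with the cosine $\Phi_k(\beta)=\dfrac{-\bg_k^\top\bd(\beta)}{\|\bg_k\|\,\|\bd(\beta)\|}$, asserts as a geometric fact (without detailed justification) that it is monotonically decreasing on $(0,1)$, deduces a unique zero of $\Phi_k-\varepsilon_k$ from $\Phi_k(0)=1$ and $\Phi_k(1)<\varepsilon_k$, and then identifies that zero with the smallest root of $P_k$ in $(0,1)$ via a case analysis on the sign of $P_k(1)$ --- a case analysis forced on it precisely because squaring admits the spurious solutions of $\Phi_k=-\varepsilon_k$ that you warn about. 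You instead work with the slack $\phi_k(\beta)=-\bg_k^\top\bd_k(\beta)-\varepsilon_k\|\bg_k\|\,\|\bd_k(\beta)\|$, whose concavity is immediate (affine term minus a positive multiple of the convex map $\beta\mapsto\|\bd_k(\beta)\|$); concavity plus the endpoint signs $\phi_k(0)>0$, $\phi_k(1)<0$ yields in one stroke the uniqueness of the zero and the interval structure $\{\phi_k\ge 0\}\cap[0,1]=[0,\beta_k^\varepsilon]$, which is exactly item ii), while the factorization $P_k(\beta)=\phi_k(\beta)\bigl(-\bg_k^\top\bd_k(\beta)+\varepsilon_k\|\bg_k\|\,\|\bd_k(\beta)\|\bigr)$ kills the spurious roots without any cases, since both factors are strictly positive on $[0,\beta_k^\varepsilon)$. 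Your route buys rigor where the paper appeals to geometric intuition (monotone sweeping of a planar cone), and it remains valid even where $\bd_k(\beta)$ vanishes, a point at which the paper's cosine is undefined; the price is the coefficient-matching computation identifying $P_k$ with $\bigl(\bg_k^\top\bd_k(\beta)\bigr)^2-\varepsilon_k^2\|\bg_k\|^2\|\bd_k(\beta)\|^2$, which the paper needs implicitly anyway. Item iii) is essentially identical to the paper's argument: triangle-inequality bound on $\|\bd_k(\beta)\|$, reduction to the affine condition $\beta(\rho_k+\pi_k)\le\rho_k$, positivity of $\pi_k$ from \eqref{nodescent} and of $\rho_k$ by definition, then item ii) to conclude $\hat\beta_k\le\beta_k^\varepsilon$.

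One caveat, inessential to your proof: your closing remark misstates the role of the hypothesis that $S_k$ is not a multiple of $I$. That hypothesis does not prevent $\bd_k(\beta)$ from vanishing: $\bg_k$ can be an eigenvector of $S_k$ with negative eigenvalue even when $S_k$ is not a multiple of $I$, in which case $\bd_k^{\mathrm{NT}}$ is a positive multiple of $\bg_k$ and $\bd_k(\beta)$ vanishes at exactly one $\beta\in(0,1)$. What actually protects you is that your $\phi_k$ and the factorized $P_k$ never divide by $\|\bd_k(\beta)\|$, so all steps remain well defined; this robustness is an advantage of your formulation over the paper's cosine-based one, not a consequence of that hypothesis.
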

    
    \begin{proof}
        We first prove that $\beta_k^\varepsilon$ is well defined. Let us consider
        \begin{equation*} 
        \Phi_k(\beta)=\frac{-\bg_k^\top\left(\beta\,\bd_k^{\mathrm{NT}}-(1-\beta)\, \xi_k\,\bg_k\right)}
        {\| \bg_k\|\,\| \beta\,\bd_k^{\mathrm{NT}}-(1-\beta)\, \xi_k\,\bg_k \|},
        \end{equation*}
        which is a continuous function of $\beta$. Note that $\Phi_k(\beta)$ is the cosine of the angle
        between the antigradient $-\bg_k$ and the direction
        $$\bd(\beta) = \beta\,\bd_k^{\mathrm{NT}}-(1-\beta) \xi_k\,\bg_k,$$
        which spans continuously the cone between $-\bg_k$, corresponding to $\bd(0)$, and the vector $\bd_k^{\mathrm{NT}}$, corresponding to $\bd(1)$. From its definition it is clear that $\Phi_k(\beta)$ is a monotonically decreasing function in the interval $(0,\,1)$. Since
        $$
        \Phi_k(0) = 1 \;\mbox{ and }\; \Phi_k(1)=\frac{-\bg_k^\top \bd_k^{\mathrm{NT}}}
        {\| \bg_k \|\,\|\bd_k^{\mathrm{NT}}\|} < \varepsilon_k,
        $$
        $\Phi_k(\beta)-\varepsilon_k$ has a unique zero $\beta_k^\varepsilon$ in $(0,1)$.
        The solutions of the equation
        \begin{equation}
        \label{shi_equation}
        \left[ -\bg_k^\top\left(\beta\,\bd_k^{\mathrm{NT}} -(1-\beta)\, \xi_k\,\bg_k\right) \right]^2=
        \left( 
        \varepsilon_k \| \bg_k \|\,\| \beta\,\bd_k^{\mathrm{NT}}-(1-\beta)\, \xi_k\,\bg_k \| \right)^2
        \end{equation} 
        are the solutions of $\Phi_k(\beta)=\pm \varepsilon_k$. By simple computations, it is easy to verify
        that the solutions of~\eqref{shi_equation} are the roots of the polynomial~\eqref{shi_equation1}.
        Now we observe that $P_k(0)= C_k > 0$. To conclude the proof of item i) we need to analyze the two possible cases about the sign of 
        $$
        P_k(1) = \left(\bg_k^\top \bd_k^{\mathrm{NT}}\right)^2-\varepsilon_k^2 \| \bg_k \|^2\,\|\bd_k^{\mathrm{NT}}\|^2.
        $$
        \begin{description}
            \item[{\normalfont a)}] If $-\varepsilon_k <\frac{-\bg_k^\top \bd_k^{\mathrm{NT}}}{
                \|\bg_k\|\,\|\bd_k^{\mathrm{NT}}\| } < \varepsilon_k$, then $P_k(1)<0$ and $\beta_k^\varepsilon$  is the only root of \eqref{shi_equation1} in $(0,1)$;
            \item[{\normalfont b)}] if $\frac{-\bg_k^\top \bd_k^{\mathrm{NT}}}{
                \|\bg_k\|\,\|\bd_k^{\mathrm{NT}}\| } < -\varepsilon_k$, then $P_k(1)> 0$ and $\beta_k^\varepsilon$ is the smallest of the two roots 
            of~\eqref{shi_equation1} in $(0,1)$. 
        \end{description}
        
        Item ii) of the theorem comes from the observation that $\Phi_k(\beta)$ is a monotonically decreasing function in $(0,\,1)$, which
        implies that $\Phi_k(\beta) \geq \Phi_k(\beta_k^\varepsilon) = \varepsilon_k$ for all $\beta \leq \beta_k^\varepsilon $.
        
        To prove item iii), we note that the search direction \eqref{new_scaled_gradient} satisfies \eqref{angle_criterion_step_k} if and only if
        \begin{equation} \label{eqn:hat_beta_cond1}
        \frac{-\beta_k\,\bg_k^\top \bd_k^{\mathrm{NT}} + (1-  \beta_k)\,\xi_k\|\bg_k\|^2}
        {\|{\bg_k}\|\,\|\beta_k \bd_k^{\mathrm{NT}} - (1-  \beta_k)\,\xi_k\,\bg_k \|}  \geq \varepsilon_k.
        \end{equation}
        Since 
        \begin{equation*}
        \frac{-\beta_k \bg_k^\top  \bd_k^{\mathrm{NT}} + (1-  \beta_k)\,\xi_k\|\bg_k\|^2}
        {\|{\bg_k}\|\, \|\beta_k \bd_k^{\mathrm{NT}} - (1-  \beta_k)\,\xi_k\,\bg_k \|}  \geq 
        \frac{-\beta_k\,\bg_k^\top  \bd_k^{\mathrm{NT}} + (1-  \beta_k)\,\xi_k\|\bg_k\|^2}
        {\beta_k \|{\bg_k}\|\,\|\bd_k^{\mathrm{NT}}\| + (1-  \beta_k)\,\xi_k\|\bg_k \|^2},
        \end{equation*}
        a sufficient condition for \eqref{eqn:hat_beta_cond1} to hold is that
        \begin{equation} \label{eqn:hat_beta_cond2}
        \frac{-\beta_k\,\bg_k^\top  \bd_k^{\mathrm{NT}} + (1-  \beta_k)\,\xi_k\|\bg_k\|^2}
        {\beta_k \|{\bg_k}\|\,\|\bd_k^{\mathrm{NT}}\| + (1-  \beta_k)\,\xi_k\|\bg_k \|^2} \geq \varepsilon_k.
        \end{equation}
        This condition gives us a way to compute a lower bound $\hat{\beta}_k$ for $\beta_k^\varepsilon$.
        By straightforward computations one can check that \eqref{eqn:hat_beta_cond2} is equivalent to 
        \begin{equation*}
        \beta_k \left( \xi_k(1-\varepsilon_k) + \frac{\bg_k^\top  \bd_k^{\mathrm{NT}}}{\|\bg_k\|^2} +
        \varepsilon_k \frac{ \|\bd_k^{\mathrm{NT}} \|}{\|\bg_k\|}\right) \leq \xi_k(1-\varepsilon_k), 
        \end{equation*}
        {i.e.,}
        \begin{equation}
        \beta_k \left(\rho_k+\pi_k\right) \leq \rho_k
        \label{beta_condition}
        \end{equation}
        where $\rho_k$ and $\pi_k$ are defined in \eqref{def_rho_pi}.
        We observe that \eqref{nodescent} implies $ \pi_k>0 $ and $\rho_k>0$ comes from the definition of $\xi_k$ and $\varepsilon_k$. 
        Therefore, we have that \eqref{beta_condition} holds if and only if $\beta_k \leq \hat{\beta}_k=\frac{\rho_k}{\rho_k+\pi_k}<1$. Item ii) implies $\hat{\beta}_k \leq \beta_k^\varepsilon $.
    \end{proof} 
    
    \begin{remark}
        Items i) and iii) of the previous theorem suggest two choices for the coefficient $\beta_k$ in \eqref{new_scaled_gradient},
        namely $\beta_k^\varepsilon$ and $\hat{\beta}_k$.  Note that $\beta_k^\varepsilon$ is the largest value of $\beta_k$ such that
        the angle criterion~\eqref{angle_criterion_step_k} is satisfied. Moreover,
        by looking at the definition of $\pi_k$ in \eqref{def_rho_pi} we can easily find a relation between
        the ``quality'' of the NT direction and the value of $\hat{\beta}_k$.
        We can indeed write $\pi_k$ as
        $$
        \pi_k =  \frac{ \|\bd^{\mathrm{NT}}_k\|}{\|\bg_k\|} \left(\frac{\bg_k^\top \bd^{\mathrm{NT}}_k}{\|\bg_k\|\,\|\bd^{\mathrm{NT}}_k\|} +
        \varepsilon_k \right) = -\frac{ \|\bd^{\mathrm{NT}}_k\|}{\|\bg_k\|} \left(\cos\left\langle-\bg_k,\, \bd^{\mathrm{NT}}_k \right\rangle - \varepsilon_k \right),
        $$
        i.e., $\pi_k$ provides a measure of the violation of the angle criterion~\eqref{angle_criterion_step_k}.
        If $\cos\left\langle-\bg_k,\,\bd_k^{\mathrm{NT}}\right\rangle$ approaches $\varepsilon_k$, 
        we have that $\pi_k$ tends to zero and $\hat{\beta}_k$ tends to $1$, allowing us to take a direction close to the NT one.
        Conversely, if $\cos\left\langle-\bg_k,\,\bd_k^{\mathrm{NT}}\right\rangle$ approaches $-1$, the value of $\pi_k$ may increase,
        implying a decrease of $\hat{\beta}_k$ and thus fostering the descent direction to be close to the SD direction.
    \end{remark}
    
    %
    
    Going back to the classical globalization strategies mentioned in the previous section,
    we note that the search direction 
    \begin{equation}
    \label{modified_ssd_direction}
    \bd_k = -\tilde{S}_k^{-1}\bg_k,
    \end{equation}
    where $\tilde{S}_k = S_k+ E_k$, is based on the following quadratic approximation of $f$ at~$\bx_k$:
    \begin{equation}
    \label{quadratic_model}
    \psi_k(\bx) = f(\bx_k) + \bg_k^\top  (\bx-\bx_k) + \frac{1}{2}(\bx-\bx_k)^\top \tilde{S}_k(\bx-\bx_k),
    \end{equation}  
    in which the role of $E_k$ is to guarantee that the model be ``sufficiently'' convex.
    Our approach is based on a different second-order model, namely
    \begin{equation*}
    \phi_k(\bx) = f(\bx_k) + \bg_k^\top  (\bx-\bx_k) + \frac{1}{2}(\bx-\bx_k)^\top W_k(\bx-\bx_k),
    \end{equation*}
    where
    \begin{equation}
    W_k=\left(\beta_k S_k^{-1}+(1-\beta_k)\xi_k I\right)^{-1}.
    \end{equation}
    Even when $W_k$ is not positive definite, the choice of $\beta_k$ guarantees that~\eqref{angle_criterion_step_k} holds for
    $\bd_k = -W_k^{-1}\,\bg_k$. A simple computation shows that we only require convexity for the univariate function 
    \begin{equation}
    \theta(\alpha)= \phi_k\left(\bx_k -\alpha W_k^{-1}\bg_k\right),
    \end{equation}
    which attains its minimum at $\alpha=1$. On the contrary, a globalization strategy like the one based on a shifted linear system of the
    form~\eqref{NMdirection} forces the overall quadratic model \eqref{quadratic_model} to be convex, potentially leading to a model
    insufficiently faithful to $f$.
    
    The directions~\eqref{NMdirection} and~\eqref{new_scaled_gradient} remind us of Trust Region (TR)
    methods~\cite{conn_2000}. These methods compute $ \bx_{k+1} $ by minimizing a quadratic model of $f$ near $\bx_k$,
    \begin{equation}\label{TRmodel}
    \varphi_k(\bx) = f(\bx_k) + \bg_k^\top  (\bx-\bx_k) + \frac{1}{2}(\bx-\bx_k)^\top S_k (\bx-\bx_k), \quad \| \bx-\bx_k \| \leq \Delta_k,
    \end{equation}
    where $\Delta_k$ is updated at each iteration to get a ``good'' approximation of $f(\bx)$ in the ball with center $\bx_k$ and radius 
    $\Delta_k$. The point $ \bx_{k+1} $ is a minimizer of $\varphi_k(\bx)$ in this ball
    if and only if $\bd_k=\bx_{k+1} - \bx_k$ is a solution of the system
    \begin{equation}
    \label{TRsystem}
    \begin{array}{ll}
    (S_k+\lambda I)\,\bd &= -\bg_k,\\ 
    \lambda\,(\Delta_k - \|\bd \|) &= 0,
    \end{array}
    \end{equation} 
    for a scalar $\lambda \ge 0$ such that $S_k+\lambda I \succcurlyeq 0$. Therefore, we can regard direction~\eqref{NMdirection}
    as a TR step. Direction~\eqref{new_scaled_gradient} can be also related to the
    dogleg and the two-dimensional subspace minimization approaches, which provide approximate solutions
    to the TR subproblem~\eqref{TRmodel} by making a search in the space spanned by the SD and Newton directions~(see, e.g., \cite{nocedal_06}).
    
    As already pointed out, unlike \eqref{shi_direction}, the search direction \eqref{new_scaled_gradient} is invariant to the scaling
    of the objective function when $\xi_k\,\bg_k$ is invariant.  In order to clarify the relevance of this issue,
    we show a simple numerical example. We consider the so-called
    ``Brown badly-scaled'' function~\cite{more_81}, defined as
    \begin{equation} \label{eq:bbs}
    f_B(x_1,x_2) = ( x_1 - 10^{6} )^2 + ( x_2 - 2\cdot10^{-6} )^2 + ( x_1 x_2 - 2 )^2.
    \end{equation}
    We compare the performances of the line-search methods
    using as search directions respectively~\eqref{shi_direction} and~\eqref{new_scaled_gradient}
    (with $\xi_k$ set as the BB2 step length defined in~\cite[equation~(5)]{barzilai_borwein_88}),
    on scaled versions of~\eqref{eq:bbs},
    $$
    f_\omega(x_1,x_2) = \omega f_B(x_1,x_2),
    $$
    for different values of the scale factor $\omega$.
    For both algorithms, $\bd^{NT}$ is set as the Newton direction. Furthermore, we set $\varepsilon_k = 10^{-3}$ for each $k$ and
    $ \|\nabla f_\omega (x_1,x_2)\| < 10^{-5} \, \omega $ as stop condition.
    Table~\ref{tab:ShiVSscaling} contains the number of iterations ({its}) and function evaluations ({evals}) performed by the SDG method;
    as expected, when the search directions~\eqref{shi_direction} are used, the performance of the line-search algorithm
    dramatically depends on the scale factor.
    On the other hand, the results in Table~\ref{tab:ShiVSscaling} confirm that the search directions computed using \eqref{new_scaled_gradient}
    are scale invariant. Notice that the Newton direction is scale invariant, whereas the gradient scales with $\omega$, so that the larger the scale factor,
    the closer~\eqref{shi_direction} is to the SD direction. This probably explains why in Table~\ref{tab:ShiVSscaling} we observe a progressive
    deterioration of the performance of the search direction~\eqref{shi_direction} when moving from $\omega = 10^{-3}$ (close to the Newton direction) to 
    $\omega=10^{3}$ (close to the SD direction). Finally, a comparison between the two search directions for $\omega=1$ demonstrates the importance 
    of a suitable choice of~$\xi_k$ in the hybrid strategy (note that in~\eqref{shi_direction} it is always $\xi_k=1$).
    \begin{table}[h!]
        \caption{Comparison of line-search methods with directions \eqref{new_scaled_gradient} and \eqref{shi_direction} on the solution of Brown badly-scaled function with different scalings.}
        \centering
        {
            \medskip
            \begin{tabular}{|l|r|r|r|r|}
                \hline
                \multicolumn{1}{|c|}{} &      \multicolumn{2}{c|}{$\bd_k$ as in \eqref{new_scaled_gradient}}       &         \multicolumn{2}{c|}{$\bd_k$ as in \eqref{shi_direction}}\\ \cline{2-5}
                \multicolumn{1}{|c|}{$\omega$} & \multicolumn{1}{r|}{its} & \multicolumn{1}{r|}{evals} & \multicolumn{1}{r|}{its} & \multicolumn{1}{r|}{evals} \\ \hline
                $10^{-3}$ &                           6 &                            12 &                           6 &                            19 \\
                $10^{-2}$ &                           6 &                            12 &                           6 &                            20 \\
                $10^{-1}$ &                           6 &                            12 &                          10 &                            32 \\
                $1$ &                           6 &                            12 &                          15 &                            42 \\
                $10$ &                           6 &                            12 &                          19 &                            46 \\
                $10^{2}$ &                           6 &                            12 &                          34 &                            86 \\
                $10^{3}$ &                           6 &                            12 &                         224 &                          1683 \\ \hline
            \end{tabular}
            \label{tab:ShiVSscaling}
        }
    \end{table}
    
    \section{Convergence \label{sec:convergence}}
    
    We now focus on the convergence properties of the SDG method.
    The theorems in this section are a slight modification of results presented by the authors in~\cite{diserafino_2020lncs}.
    
    \begin{theorem}
        Let $ f\in C^2(\Re^n)$ and assume that $\{\varepsilon_k\}$ is bounded away from $0$. Then, for
        any $\bx_0$, the limit points of the sequence $\{\bx_k\}$ generated by Algorithm~\ref{SDG} are stationary.
    \end{theorem}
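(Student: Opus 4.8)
The plan is to run the classical Zoutendijk-type argument, exploiting that by construction (the angle test in Algorithm~\ref{SDG} together with Theorem~\ref{thm:beta_opt_subopt}) every accepted direction satisfies the angle criterion~\eqref{angle_criterion} with $\varepsilon := \inf_k \varepsilon_k > 0$, i.e. $-\bg_k^\top \bd_k \ge \varepsilon \|\bg_k\|\,\|\bd_k\|$ for all $k$. First I would fix a limit point $\bx^*$, say $\bx_{k}\to\bx^*$ along an index set $K$, and argue by contradiction assuming $\bg(\bx^*)\neq\bzero$, so that $\|\bg_k\|\ge\delta>0$ on $K$ for $k$ large. Since the method is strictly monotone, $\{f_k\}$ is decreasing, and continuity gives $f_{k}\to f(\bx^*)$ along $K$; a decreasing sequence with a finite limit point converges to it, hence $f_k - f_{k+1}\to 0$.

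Next I would translate the Armijo condition~\eqref{armijo} into a decrease estimate along the normalized step. Writing $t_k=\alpha_k\|\bd_k\|$ for the true displacement, \eqref{armijo} and the angle criterion give $f_k-f_{k+1}\ge \sigma_1\,\varepsilon\,t_k\,\|\bg_k\|$, so on $K$ the bound $\|\bg_k\|\ge\delta$ forces $t_k\to 0$. Consequently $\bx_{k+1}$, and the segment joining $\bx_k$ to the last rejected backtracking trial point (whose displacement is a fixed multiple of $t_k$), stay in a fixed closed ball $B$ around $\bx^*$ for $k\in K$ large. On the compact set $B$ the Hessian is bounded, $L:=\max_{B}\|H\|<\infty$, so $\bg$ is Lipschitz with constant $L$ on $B$.

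I would then produce a lower bound on the step. If $\alpha_k<1$, the last rejected trial length violates~\eqref{armijo}; combining the mean value theorem with the $L$-Lipschitz bound on $\bg$ over $B$ yields $t_k \ge \tau(1-\sigma_1)\varepsilon\|\bg_k\|/L$, where $\tau\in(0,1)$ is the backtracking factor. On $K$ this is bounded below by a positive constant, contradicting $t_k\to 0$; hence $\alpha_k=1$ for all large $k\in K$, and then $\|\bd_k\|=t_k\to 0$.

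The main obstacle is the final step: deriving a contradiction from $\|\bd_k\|\to 0$ while $\|\bg_k\|\ge\delta$, i.e. showing the search direction is gradient related, $\|\bd_k\|\ge\eta\|\bg_k\|$ for some $\eta>0$ on $K$. When $\bd_k=\bd_k^{\mathrm{NT}}$ solves~\eqref{scaled_gradient}, this follows from $\|\bg_k\|=\|S_k\bd_k\|\le\|S_k\|\,\|\bd_k\|$ together with the boundedness of $\|S_k\|$ on $B$ (automatic for Newton's method, where $S_k=H_k$ is continuous, which is the case singled out in the statement). When instead $\bd_k$ is the combined direction~\eqref{new_scaled_gradient}, I would bound $\|\bd_k\|$ from below by projecting onto $\bg_k$: since the $\mathrm{NT}$ part is a descent term one gets $\|\bd_k\|\ge (-\bg_k^\top\bd_k)/\|\bg_k\|\ge(1-\beta_k)\,\xi_k\,\|\bg_k\|$, so the estimate persists provided the scaled steepest-descent contribution $(1-\beta_k)\xi_k$ is controlled away from $0$. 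Making this uniform — ruling out a simultaneous collapse of $\beta_k\to1$ and of the $\mathrm{NT}$ contribution near a nonstationary point — is the delicate point, and is where the local boundedness of $S_k$ (and of the scaling $\xi_k$) is really needed; everything else is the standard line-search machinery.
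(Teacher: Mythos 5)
Everything up to your final step is a sound, from-scratch reconstruction of the classical Armijo--backtracking argument (monotonicity of $\{f_k\}$, the decrease estimate $f_k-f_{k+1}\ge \sigma_1\varepsilon\, t_k\|\bg_k\|$, and the backtracking lower bound on $t_k$ are all correct). But the gap you flag at the end is genuine, and at the level of generality of the statement it cannot be closed at all. The theorem is stated for Algorithm~\ref{SDG} with an \emph{arbitrary} symmetric $S_k$ (Newton is not singled out in this statement; that is the subsequent theorem), and the angle criterion constrains only the direction of $\bd_k$, never its length. Concretely: take $f(\bx)=\frac{1}{2}\|\bx\|^2$ and $S_k=2^{k+1}I$. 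Then $\bd_k^{\mathrm{NT}}=-2^{-(k+1)}\bg_k$ passes the angle test with cosine equal to $1$, the unit step satisfies \eqref{armijo} for any $\sigma_1<1/2$, and the iteration $\bx_{k+1}=\left(1-2^{-(k+1)}\right)\bx_k$ converges to $\bx_0\prod_{j\ge 1}\left(1-2^{-j}\right)\neq\bzero$, a nonstationary point. So no argument can establish the missing bound $\|\bd_k\|\ge\eta\|\bg_k\|$ from the stated hypotheses; some norm control on $S_k$ (or directly on $\bd_k$) must be added. A smaller flaw inside the same step: your bound $\|\bd_k\|\ge(1-\beta_k)\,\xi_k\|\bg_k\|$ for the hybrid direction \eqref{new_scaled_gradient} uses $-\bg_k^\top\bd_k^{\mathrm{NT}}\ge 0$, which can fail precisely in the branch where that direction is used, since the angle test fails in particular whenever $\bd_k^{\mathrm{NT}}$ is an ascent direction.

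You should know, however, that this is not a defect of your route relative to the paper's: it is the paper's own gap, made visible. The published proof is two lines long: it notes that Theorem~\ref{thm:beta_opt_subopt} supplies directions satisfying the angle criterion with $\varepsilon_k\ge\varepsilon_{\min}>0$, that $\alpha_k$ is chosen by Armijo backtracking, and then invokes Proposition~1.2.1 of \cite{bertsekas_99}. That proposition assumes the directions are \emph{gradient related}: along any subsequence converging to a nonstationary point, $\{\bd_k\}$ must be bounded and satisfy $\limsup_k \bg_k^\top\bd_k<0$. The angle criterion alone does not imply gradient relatedness --- the failure mode is exactly the collapse $\|\bd_k\|\to 0$ you were trying to exclude --- so the citation carries an unverified hypothesis, which your expansion of the argument has uncovered. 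To actually complete the proof (yours or the paper's) one must assume, e.g., that $\|S_k\|$ and $\|S_k^{-1}\|$ are uniformly bounded (automatic for Newton's method along a convergent subsequence, by continuity of $H$) and that $\xi_k$ is confined to a fixed interval $[\nu_1,\nu_2]$, as the paper's implementation indeed enforces; under such hypotheses your final step goes through and your proof is complete.
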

    \begin{proof}
        Let $0<\varepsilon_{min} =\inf\{\varepsilon_k\}$. At any iteration $k$, Theorem~\ref{thm:beta_opt_subopt} guarantees
        the existence of a coefficient $\beta_k$ for step~\ref{Alg_SDG_find_beta} of Algorithm~\ref{SDG}, therefore a direction
        $\bd_k$ satisfying $ \cos \left\langle \bd_k,\,-\bg_k\right\rangle \geq \varepsilon_k $
        can be found. Since the sequence $\left\lbrace \varepsilon_k\right\rbrace$ is bounded from below by
        $ \varepsilon_{min} $ and $\alpha_k$ is obtained by a backtracking technique to fulfill condition~\eqref{armijo},
        the thesis follows from~\cite[Proposition 1.2.1]{bertsekas_99}.
    \end{proof}
    
    The next theorem shows that the SDG method has quadratic convergence rate when the direction
    $ \bd_k^{\mathrm{NT}} $ is the Newton direction.
    The proof is omitted because it is practically the same as the proof of Theorem~2 in~\cite{diserafino_2020lncs}.
    
    \begin{theorem}
        Let $ f\in C^2(\Re^n)$ and let $\{\bx_k\}$ be generated by the SDG method where $ \bd_k^{\mathrm{NT}} $
        is the Newton direction. Let $\sigma_1$ in~\eqref{armijo} be such that $0 < \sigma_1 < \frac{1}{2}$ and the
        sequence $\{\varepsilon_k\}$ be nonincreasing with limit $\varepsilon_{min} > 0 $. Suppose also that there
        exists a limit point $\hat{\bx}$ of $\{\bx_k\}$ where $H(\hat{\bx})$ is positive definite and $H(\bx) $ is Lipschitz
        continuous around $\hat{\bx}$. If $\varepsilon_{min}$ is sufficiently small, then $\{\bx_k\}$  converges to
        $\hat{\bx}$ with quadratic rate.
    \end{theorem}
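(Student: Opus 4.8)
The plan is to show that, once an iterate enters a sufficiently small ball around $\hat\bx$, the globalization in Algorithm~\ref{SDG} never activates again, the backtracking of step~\ref{Alg_NSD_backtracking_alpha} always accepts the unit step, and the iteration reduces to the pure Newton iteration, whose classical local quadratic convergence then carries the whole tail of $\{\bx_k\}$ to $\hat\bx$. First I would fix the geometry near $\hat\bx$: since $H(\hat\bx)\succ 0$ and $H$ is continuous, there is a ball $B(\hat\bx,\delta)$ and constants $0<m\le M$ with the spectrum of $H(\bx)$ contained in $[m,M]$ for all $\bx\in B(\hat\bx,\delta)$; shrinking $\delta$, the ratio $m/M$ stays above a fixed fraction of $\mu_{\min}(H(\hat\bx))/\mu_{\max}(H(\hat\bx))$. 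For such $\bx$ the Newton direction $\bd^{\mathrm{NT}}=-H^{-1}\bg$ satisfies
\[
\cos\langle -\bg,\bd^{\mathrm{NT}}\rangle=\frac{\bg^\top H^{-1}\bg}{\|\bg\|\,\|H^{-1}\bg\|}\ge \frac{m}{M},
\]
using $\bg^\top H^{-1}\bg\ge \|\bg\|^2/M$ and $\|H^{-1}\bg\|\le \|\bg\|/m$. Here the hypothesis that $\varepsilon_{min}$ is sufficiently small enters: taking $\varepsilon_{min}<m/M$ and recalling that $\{\varepsilon_k\}$ is nonincreasing with limit $\varepsilon_{min}$, there is $K_1$ with $\varepsilon_k\le m/M$ for all $k\ge K_1$, so that for $k\ge K_1$ with $\bx_k\in B(\hat\bx,\delta)$ the angle criterion~\eqref{angle_criterion} holds for $\bd_k^{\mathrm{NT}}$ and the algorithm sets $\bd_k=\bd_k^{\mathrm{NT}}$.

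Next I would show the unit step survives the Armijo test. With $\bd_k=\bd_k^{\mathrm{NT}}$ and $H_k\bd_k=-\bg_k$, a second-order expansion of $f\in C^2$ gives
\[
f(\bx_k+\bd_k)-f_k=\bg_k^\top\bd_k+\tfrac12\bd_k^\top H_k\bd_k+o(\|\bd_k\|^2)=\tfrac12\,\bg_k^\top\bd_k+o(\|\bd_k\|^2).
\]
Since $-\bg_k^\top\bd_k=\bg_k^\top H_k^{-1}\bg_k\ge \|\bg_k\|^2/M$ while $\|\bd_k\|\le\|\bg_k\|/m$, the quantity $-\bg_k^\top\bd_k$ is bounded below by a positive multiple of $\|\bd_k\|^2$, so the remainder is negligible against $\bg_k^\top\bd_k$ as $\bx_k\to\hat\bx$. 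Hence~\eqref{armijo} with $\alpha_k=1$, i.e. $\tfrac12\bg_k^\top\bd_k+o(\|\bd_k\|^2)\le \sigma_1\bg_k^\top\bd_k$, reduces to $o(\|\bd_k\|^2)\le(\sigma_1-\tfrac12)\bg_k^\top\bd_k$, whose right-hand side is a positive multiple of $\|\bd_k\|^2$ exactly because $\sigma_1<\tfrac12$. Thus there is $\delta'\le\delta$ such that for $\bx_k\in B(\hat\bx,\delta')$ the backtracking returns $\alpha_k=1$, and the iteration becomes the pure Newton step $\bx_{k+1}=\bx_k-H_k^{-1}\bg_k$.

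Finally I would invoke the standard local analysis. Because $\hat\bx$ is a limit point it is stationary by the preceding theorem, so $\bg(\hat\bx)=\bzero$; together with $H(\hat\bx)\succ 0$ and $H$ Lipschitz around $\hat\bx$, the classical estimate yields a constant $C$ with $\|\bx_{k+1}-\hat\bx\|\le C\|\bx_k-\hat\bx\|^2$ whenever $\bx_k$ lies in a small enough ball and $\alpha_k=1$. Choosing a radius $r\le\delta'$ so small that $Cr<\tfrac12$ makes the Newton map forward-invariant on $B(\hat\bx,r)$: if $\bx_k\in B(\hat\bx,r)$ then $\|\bx_{k+1}-\hat\bx\|<\tfrac12\|\bx_k-\hat\bx\|<r$. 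Since $\hat\bx$ is a limit point, some $\bx_K$ with $K\ge K_1$ falls inside $B(\hat\bx,r)$; by induction every later iterate stays inside, the globalization is never triggered, the unit step is always taken, and $\|\bx_k-\hat\bx\|\to 0$ at the quadratic rate, so the full sequence $\{\bx_k\}$—not merely a subsequence—converges quadratically to $\hat\bx$.

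The step I expect to be most delicate is the last one, namely upgrading subsequential convergence to convergence of the whole sequence: this rests entirely on the contraction bound $Cr<\tfrac12$ making the ball forward-invariant, so that entering it once (guaranteed by $\hat\bx$ being a limit point) is enough to trap all subsequent iterates. The two design conditions $\varepsilon_{min}<m/M$ and $\sigma_1<\tfrac12$ are precisely what force, respectively, the globalization to switch off and the unit step to be accepted inside the ball, and the bookkeeping of the thresholds $K_1$, $\delta$, $\delta'$, $r$ must be arranged so that all three requirements hold simultaneously from some index on.
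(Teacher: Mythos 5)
Your proof is correct and takes essentially the same route as the proof the paper relies on (it omits its own proof, deferring to Theorem~2 of~\cite{diserafino_2020lncs}): near $\hat{\bx}$ the Newton direction satisfies the angle test once $\varepsilon_k$ falls below the local eigenvalue ratio, $\sigma_1<\tfrac{1}{2}$ forces backtracking to accept the unit step, and classical local Newton theory plus the forward-invariance (capture) argument upgrades subsequential to full quadratic convergence. No gaps.
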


    \section{Computational experiments\label{sec:experiments}}
    
    We implemented two MATLAB versions of the SDG algorithm, using Newton's method and the BFGS method, and compared 
    them with the Modified Newton method using a modified Cholesky factorization~\cite{fang_08} and with the CBFGS method~\cite{li_01a},
    respectively. To better understand the effect of our globalization strategy, we also run Newton's method,
    the BFGS method, and the SD one with the BB2 Barzilai-Borwein step length used in the numerical example
    at the end of Section~\ref{sec:SDG}.
    
    In the SDG and pure SD methods, we set $\xi_0 = 1 / \| \bg_0 \|$, $\xi_k = \max \{ \xi_k^{\mathrm{BB2}}, \nu_1 \}$ if $\xi_k^{\mathrm{BB2}} > 0$,
    and $\xi_k = \min \{ 10 \, \xi_{k-1}^{\mathrm{BB2}}, \nu_2 \}$ otherwise; here $\xi_k^{\mathrm{BB2}}$ is the BB2 step length, $\nu_1 = 10^{-5}$ and
    $\nu_2 = 10^5$. We chose BB2 instead of other BB step lengths (see, e.g., \cite{diserafino_2018}) because BB2 was more effective in preliminary
    numerical experiments. The Hessian approximations in SDG with BFGS, in BFGS and in CBFGS were initialized as explained
    in~\cite[page~143]{nocedal_06}.
    We applied a shrinking strategy for the selection of $\varepsilon_k$ in~\eqref{angle_criterion_step_k}: given
    $\varepsilon_0 \in (0,1)$ and $\zeta = 0.95$, at the $k$-th iterate ($k>0$) we set $\varepsilon_k = \zeta \, \varepsilon_{k-1}$
    if $\beta_{k-1} < 1$, and $\varepsilon_k = \varepsilon_{k-1}$ otherwise. To prevent the sequence $\{\varepsilon_k\}$
    from going toward zero, we set a threshold for $\varepsilon_k$ equal to $\bar\varepsilon = 10 \, \varepsilon_{mac}$,
    where $\varepsilon_{mac}$ is the machine epsilon. It is worth noting that in none of the tests performed the value of $\varepsilon_k$
    reached the threshold. In all the algorithms, the Armijo backtracking line search with $\sigma_1=10^{-4} $ (see~\eqref{armijo})
    and quadratic interpolation~\cite[Section~3.5]{nocedal_06}  was performed. The methods were stopped as soon as
    \begin{equation}\label{eqn:stopping_criterion}
    \| \bg_k \| < \tau_g \| \bg_0 \|,
    \end{equation}
    with $\tau_g = 10^{-5} $;
    as a safeguard we also stopped the execution when a maximum number, $k_{max}$, of 2000 iterations was achieved or the objective function
    appeared to get stuck, i.e.,
    $$
    \frac{f(\bx_{k-1})-f(\bx_k)}{|f(\bx_{k-1})|} < \bar\varepsilon.
    $$
    
    Algorithm~\ref{SDG_implemented} is a detailed version of Algorithm~\ref{SDG} that includes implementation details.
    By numerical experiments we found that the use of the pure SD direction with BB2 step length is computationally
    convenient when $\bd_k={\bd}_k^{\mathrm{NT}}$ is not a descent direction (see lines 12-13).
    The notation SDG[NT,$\varepsilon_0$] highlights that the algorithm uses a selected NT method (e.g., Newton's or BFGS) and
    $\varepsilon_0$ as initial value of the sequence $\{ \varepsilon_k \}$.
    
    All the experiments were carried out using MATLAB R2018b. 
    Comparisons were performed by using the performance profiles introduced in~\cite{dolan_2002}, which are briefly described next for completeness.
    
    Let $\mathcal{S}_{\mathcal{T}\!,\,\mathcal{A}}\ge 0$
    be a statistic corresponding to the solution of a test problem $\mathcal{T}$ by an algorithm $\mathcal{A}$, and suppose
    that the smaller the statistic the better the algorithm. Furthermore, let $\mathcal{S}_{\mathcal{T}}$ be the smallest value attained
    on the test $\mathcal{T}$ by one of the algorithms under analysis.
    The performance profile of the algorithm $\mathcal{A}$ is defined as     
    $$   
    \pi(\chi)=\frac { \mbox{number of tests such that }
        \mathcal{S}_{\mathcal{T}\!,\,\mathcal{A}} /\mathcal{S}_{\mathcal{T}} \le \chi }
    { \mbox{number of tests} }, \quad \chi \ge 1,
    $$
    where the ratio $\mathcal{S}_{\mathcal{T},\,\mathcal{A}} / \mathcal{S}_{\mathcal{T}}$ is set to
    $+\infty$ if algorithm $\mathcal{A}$ fails in solving~$\mathcal{T}$.
    In other words, $\pi(\chi)$ is the fraction of problems for which $\mathcal{S}_{\mathcal{T},\,\mathcal{A}}$
    is within a factor~$\chi$ of the smallest value $S_{\mathcal{T}}$.
    Thus $\pi (1)$ is the percentage of problems for which $\mathcal{A}$ is the best,
    while $\lim_{\chi\rightarrow +\infty} \pi(\chi)$ gives the percentage of problems that are successfully
    solved by $\mathcal{A}$.
    
    The performance profiles considered in this work use as performance statistics
    the number of iterations and the number of function evaluations. We note that in Section~\ref{sec:numerical_res_Mod_Chol},
    in comparing our SDG algorithm based on Newton's method with an MN method, we do not consider the execution time
    because the MN implementation exploits a C code for the modified Cholesky factorization, called via a MATLAB mex file,
    while the Newton systems in the SDG algorithm are solved by the MATLAB function \textsf{backslash}.
    Thus, in this case a time comparison would be unfair. In all the other experiments
    the algorithms compared have about the same cost per iteration, therefore using the number of iterations as performance statistic
    appears sensible. Furthermore, the number of objective function evaluations is related to the number of line searches performed,
    and provides also information on the quality of the descent direction. Nevertheless, we use a performance profile based on
    the execution time at the end of Section~\ref{sec:numerical_res_Mod_Chol}, to further support some results.
    
    To better show the effectiveness of the proposed strategy, we considered two sets of test problems, described in the following section.
    
    \begin{algorithm*}[t!]
        \caption{SDG[NT,$\varepsilon_0$] \label{SDG_implemented}}
        \begin{algorithmic}[1]
            \State choose an NT method and $ \varepsilon_0 \in (0,1) $;
            \State choose $\bx_0 \in \Re^n$, $\zeta \in (0,1)$, $\sigma_1 \in (0,1/2)$, $ \bar\varepsilon = 10 \, \varepsilon_{mac}$, $ \tau_g \in (0,1)$, $ k_{\max} \in \N$;
            \State $k=0;$ \textit{continue} = true;
            \While {($\|\bg_k\| \geq \tau_g \|\bg_0\|$) $\wedge$ \textit{continue}}
            \State compute $S_k$ according to the NT method;
            \State ${\bd}_k^{\mathrm{NT}}= - S_k^{-1}\bg_k$;
            \If {$ \cos\left\langle \bd_k^{\mathrm{NT}},\,-\bg_k \right\rangle \geq \varepsilon_k$}
            \State $\bd_k={\bd}_k^{\mathrm{NT}}$;
            \State $ \varepsilon_{k+1} = \varepsilon_{k} $;
            \Else
            \State compute the step length $\xi_k$;
            \If {$ \cos\left\langle \bd_k^{\mathrm{NT}},\,-\bg_k \right\rangle \leq 0$}
            \State $\bd_k= -\xi_k \bg_k$;
            \Else
            \State {compute $\hat{\beta}_k$ as defined in \eqref{def:hat_beta_k}; \label{SDG_implemented_compute_hat_beta_k}}
            \State $\bd_k= \hat{\beta}_k\bd_k^{\mathrm{NT}}-(1-\hat{\beta}_k) \xi_k\,\bg_k$;
            \EndIf
            \State $ \varepsilon_{k+1} = \max \left\lbrace \bar\varepsilon,\; \zeta \, \varepsilon_{k}\right\rbrace $;
            \EndIf
            \State select $\alpha_k$ satisfying \eqref{armijo} by backtracking with quadratic interpolation \cite[Section~3.5]{nocedal_06}, starting from $ \alpha_k=1$;
            \State $\bx_{k+1}=\bx_{k} + \alpha_k \bd_k$;
            \State $k = k+1$;
            \If{$ \left(k > k_{max}\right) \, \vee\, \left( \vert f_{k-1}- f_{k}\vert < \bar\varepsilon \,\vert f_{k-1}\vert  \right) $} 
            \State \textit{continue} = false;
            \EndIf
            \EndWhile
        \end{algorithmic}
    \end{algorithm*}
    
    \subsection{Test problems}
    
    \subsubsection{Nonconvex problems}
    
    We considered 36 problems available from \url{https://people.sc.fsu.edu/~jburkardt/m\_src/test\_opt/test\_opt.html},
    including the unconstrained minimization problems from the Mor\'{e}-Garbow-Hillstrom collection~\cite{more_81}
    and other problems. We set the problem size equal to 100 for all the problems where the dimension could be chosen
    by the user. For each problem we used 10 starting points, i.e., the point $\bx_0$ provided with the problem and the points
    $\bx_0^s$, with $s=1,\ldots,9$, where $(\bx_0^s)_i = (\bx_0)_i + \gamma_i^s$, $\gamma_i^s$ was
    a random number in $\left[-\eta_s a_i, \eta_s a_i \right]$, $a_i = \left| (\bx_0)_i \right|$, and the values
    $ \eta_s $ were logarithmically spaced in the interval $ \left[10^{-2},10^{-1}\right] $.
    These choices resulted in a set of 360 nonconvex optimization problem instances.

    \subsubsection{Convex problems coming from machine learning}
    
    The second set of test problems consists in the minimization of convex functions arising from machine learning. In particular, given $N$ pairs
    $ (\ba_i,b_i) $, where $\ba_i \in \Re^n$ and $b_i \in \{-1,1\}$, we considered the problem of training a linear classifier by minimizing the function
    \begin{equation}\label{eqn:logistic_regression_problem}
    f(\bx)=\frac{1}{N} \sum_{i=1}^{N} f_i(\bx) + \frac{\mu}{2} \|\bx\|^2,    
    \end{equation}
    where $f_i(x) = \log\left(1+ e^{-b_i\,\ba_i^\top\bx}\right)$ and $ \mu > 0 $.
    
    \begin{table}[h!]
        \caption{Number of points and number of features for each machine learning dataset.
            We indicate the source by adding a superscript to the dataset name,
            according to the following list: \newline
            1 - \url{https://www.csie.ntu.edu.tw/~cjlin/libsvmtools/datasets/},\newline
            2 - \url{http://www.ics.uci.edu/~mlearn/MLRepository.html},\newline
            3 - NAACCR Incidence - CiNA Public File, 1995-2015, North American Association of Central Cancer Registries,\newline
            4 - \url{http://yann.lecun.com/exdb/mnist}.}
        \centering
        {
            \medskip
            \begin{tabular}{|l|r|r|}
                \hline
                \multicolumn{1}{|c|}{name} & \multicolumn{1}{c|}{points} & \multicolumn{1}{c|}{features} \\ \hline
                a6a\footnotemark[1]        &                          11220 &                              123 \\
                a7a\footnotemark[1]        &                          16100 &                              123 \\
                a8a\footnotemark[1]        &                          22696 &                              123 \\
                a9a\footnotemark[1]        &                          32561 &                              123 \\
                adult\footnotemark[2]      &                          48842 &                              122 \\
                cina\footnotemark[3]       &                          16033 &                              132 \\
                cod-rna\footnotemark[1]    &                          59535 &                                8 \\
                ijcnn1\footnotemark[1]     &                          49990 &                               22 \\
                mnist\footnotemark[4]      &                           7603 &                              100 \\
                mushrooms\footnotemark[1]  &                           8124 &                              112 \\
                phishing\footnotemark[1]   &                          11055 &                               68 \\
                w6a\footnotemark[1]        &                          17188 &                              300 \\
                w7a\footnotemark[1]        &                          24692 &                              300 \\
                w8a\footnotemark[1]        &                          49749 &                              300 \\ \hline
            \end{tabular}
            \label{tab:datasets}
        }
    \end{table}
    
    We considered 14 datasets, whose dimensions and sources are reported in Table~\ref{tab:datasets}.
    For each dataset we considered a 10-fold cross validation setting, thus obtaining 10 different training problems of the form \eqref{eqn:logistic_regression_problem} with $N$ approximately equal to 0.9 times the total number of points. For each problem we set
    $\mu=\frac{1}{N}$, which is a choice usually found in literature. This produced a total of 140 instances for training our strategy.
    For this set of test problems we focused on the BFGS method.

    \subsection{Numerical results}
    
    \subsubsection{Comparison on the choice of $\beta_k$}
    First, we focused on the choice of $\beta_k$ in~\eqref{new_scaled_gradient}.
    Theorem~\ref{thm:beta_opt_subopt} suggests $\beta_k^\varepsilon$ and $\hat{\beta}_k$ as two possible alternatives. Just to get a first picture,
    we considered the so-called ``Gulf research and developement'' function \cite{more_81}, defined as
    \begin{equation*}
    f_{GRD}(x_1,x_2,x_3) = \sum_{i=1}^{99} \left[ \exp\left(-\frac{\left\vert \left(-50\log(\frac{i}{100})\right)^{\frac{2}{3}} + 25 - x_2 \right\vert^{x_3}}{x_1}\right) - \frac{i}{100}  \right]^2,
    \end{equation*}
    with starting point $ \bx_0 = [40, 20, 1.2]^\top$. We ran SDG[Newton,0.5] with $\beta_k=\hat{\beta}_k$ and computed also
    $\beta_k^\varepsilon$ at each iteration. The values of $\beta_k^\varepsilon$ and $\hat{\beta}_k$ are shown in the top plot in  Figure~\ref{fig:optimalVSsuboptimal_GR&D}, for the iterations in which the Newton step was rejected as a search direction.
    For the same iterations, in the bottom plot we depicted the values of 
    $\cos\langle\bd_k,\,-\bg_k\rangle$ for $\bd_k$ in~\eqref{new_scaled_gradient}, computed with 
    $ \beta_k= \beta_k^\varepsilon$, $\beta_k=\hat{\beta}_k$ and $\beta_k=1$ (the last one corresponds to the pure Newton's method). 
    This example suggests that the difference between $ \beta_k^\varepsilon$ and  $\hat{\beta}_k$ is negligible, especially when close to the solution.
    Conversely, the angle between $\bd_k$ (computed with either $ \beta_k^\varepsilon$ or $\hat{\beta}_k$) and $\bd_k^{\textrm{Newton}}$
    is non-negligible, especially far from the solution.
    \begin{figure}[h!]
        \centering
        \includegraphics[width=0.9\textwidth]{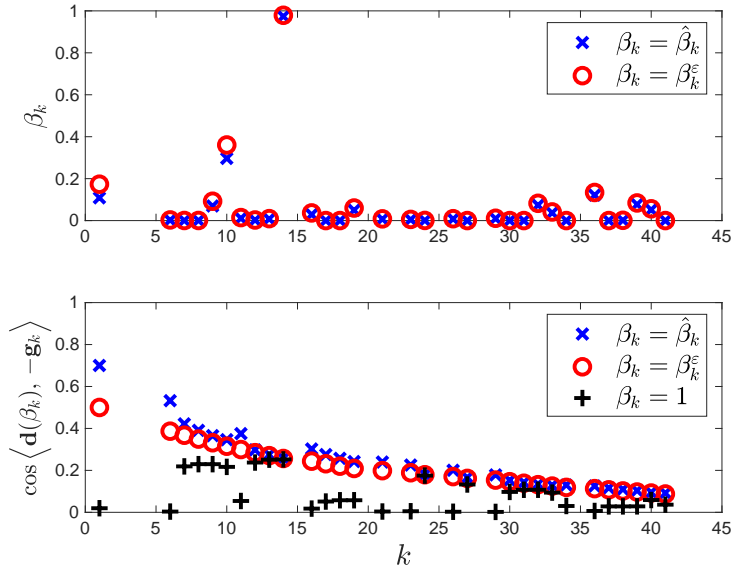}
        \vskip -9pt
        \caption{Test on the Gulf research and development function. Top plot: values of $ \beta_k^\varepsilon$ and $\hat{\beta}_k$. Bottom plot: values of 
            $\cos\langle\bd(\beta_k),\,-\bg_k\rangle$, with $\bd(\beta) = \beta\,\bd_k^{\textrm{Newton}}-(1-\beta) \xi_k\,\bg_k$ for $ \beta_k= \beta_k^\varepsilon$, $\beta_k=\hat{\beta}_k$ and $\beta_k=1$. 
            \label{fig:optimalVSsuboptimal_GR&D}}
    \end{figure}
    
    Then we ran two versions of SDG[Newton,0.5], with $ \beta_k= \beta_k^\varepsilon$ and $\beta_k=\hat{\beta}_k$, on the solution
    of the 360 nonconvex problem instances previously described, looking for experimental evidence about the choice of $\beta_k$. 
    Since the problems are nonconvex, different algorithms may reach different local minima starting from the same point.
    We noted that, out of the 360 considered problem instances, SDG went to the smallest local minimum 267 times with
    $\beta_k=\hat{\beta}_k$, and 295 times with  $ \beta_k= \beta_k^\varepsilon$. 
    To have a fair picture, we compared the two versions of SDG on the
    202 instances where they reached equal solutions (two solutions were considered equal if they coincided up to the third 
    significant digit). 
    The performance profiles reported in Figure~\ref{fig:optimalVSsuboptimal_ALL_newton} show
    a comparison in terms of number of iterations (left) and number of objective function evaluations (right), and suggest that
    $\beta_k=\hat{\beta}_k$ is preferable to $\beta_k^\varepsilon$. If we had to venture a guess,
    based on our experience, we would say that far from the solution
    $\hat{\beta}_k$ can be significantly smaller then $\beta_k^\varepsilon$, and this increases the SD component
    in~\eqref{new_scaled_gradient}. Far from the solution, the SD direction with a suitable step length like BB2 can
    even be more effective than Newton's method in decreasing the objective function, and this might explain, to some extent,
    the results in Figure~\ref{fig:optimalVSsuboptimal_ALL_newton}. 
    
    \begin{figure}[t!]
        \centering
        \includegraphics[width=0.5\textwidth]{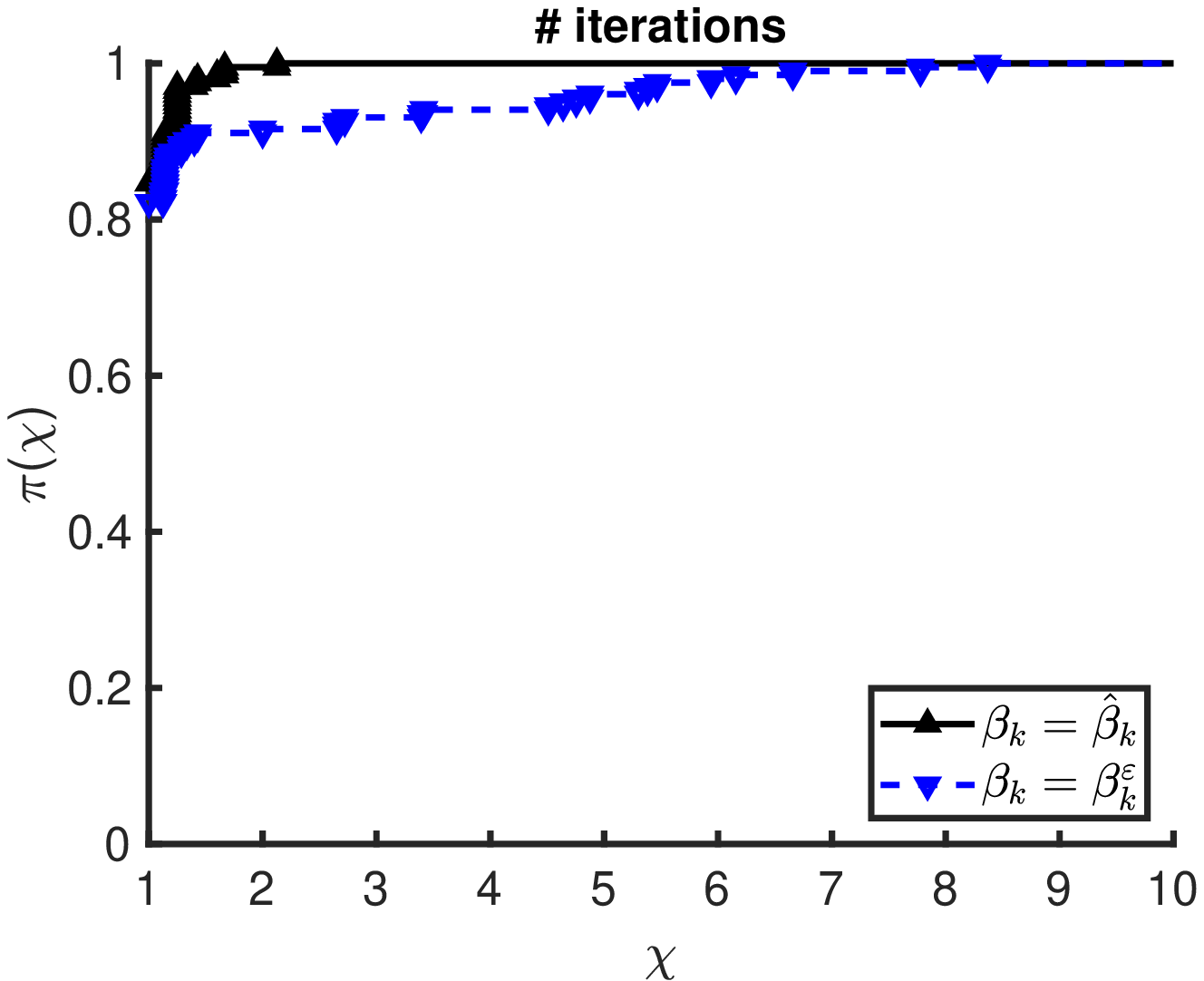}\includegraphics[width=0.5\textwidth]{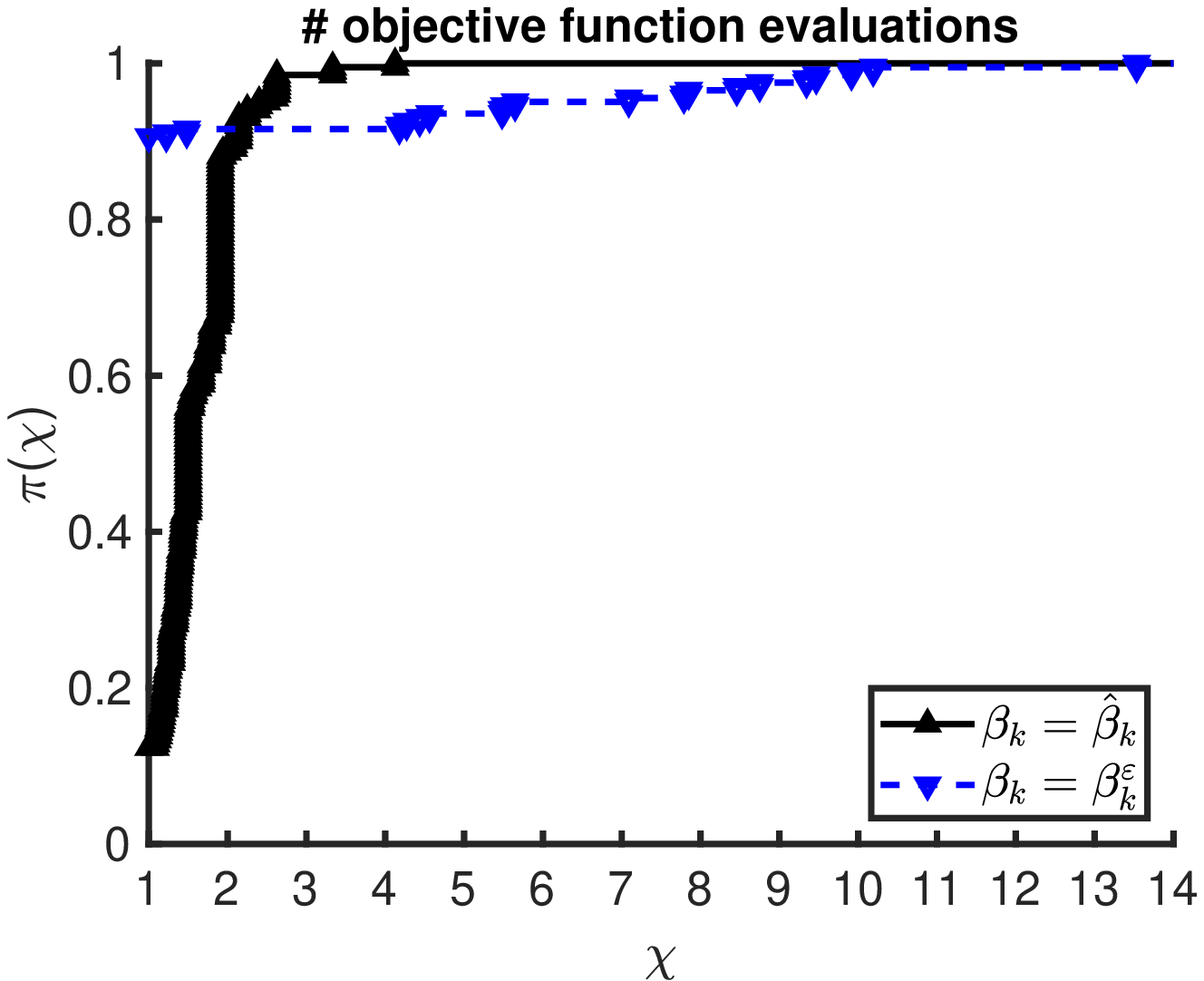}
        \vskip -9pt
        \caption{Performance profiles of SDG[Newton,0.5], with $ \beta_k= \beta_k^\varepsilon$ and $\beta_k=\hat{\beta}_k $, on the solution of the 202 nonconvex problem instances in which the two algorithms reached the same solution.\label{fig:optimalVSsuboptimal_ALL_newton}}
    \end{figure}
    
    We also compared SDG[BFGS,0.5] with $\beta_k^\varepsilon$ and $\hat{\beta}_k$ on the first set of test problems. In this case,
    the two versions of SDG computed the same solution on 148 problem instances.
    As shown by the performance profiles in Figure~\ref{fig:optimalVSsuboptimal_ALL_bfgs}, the implementation with $\beta_k=\hat{\beta}_k$ slightly
    outperformed the one with $\beta_k={\beta}_k^\varepsilon$.
    \begin{figure}[h!]
        \centering
        \includegraphics[width=0.5\textwidth]{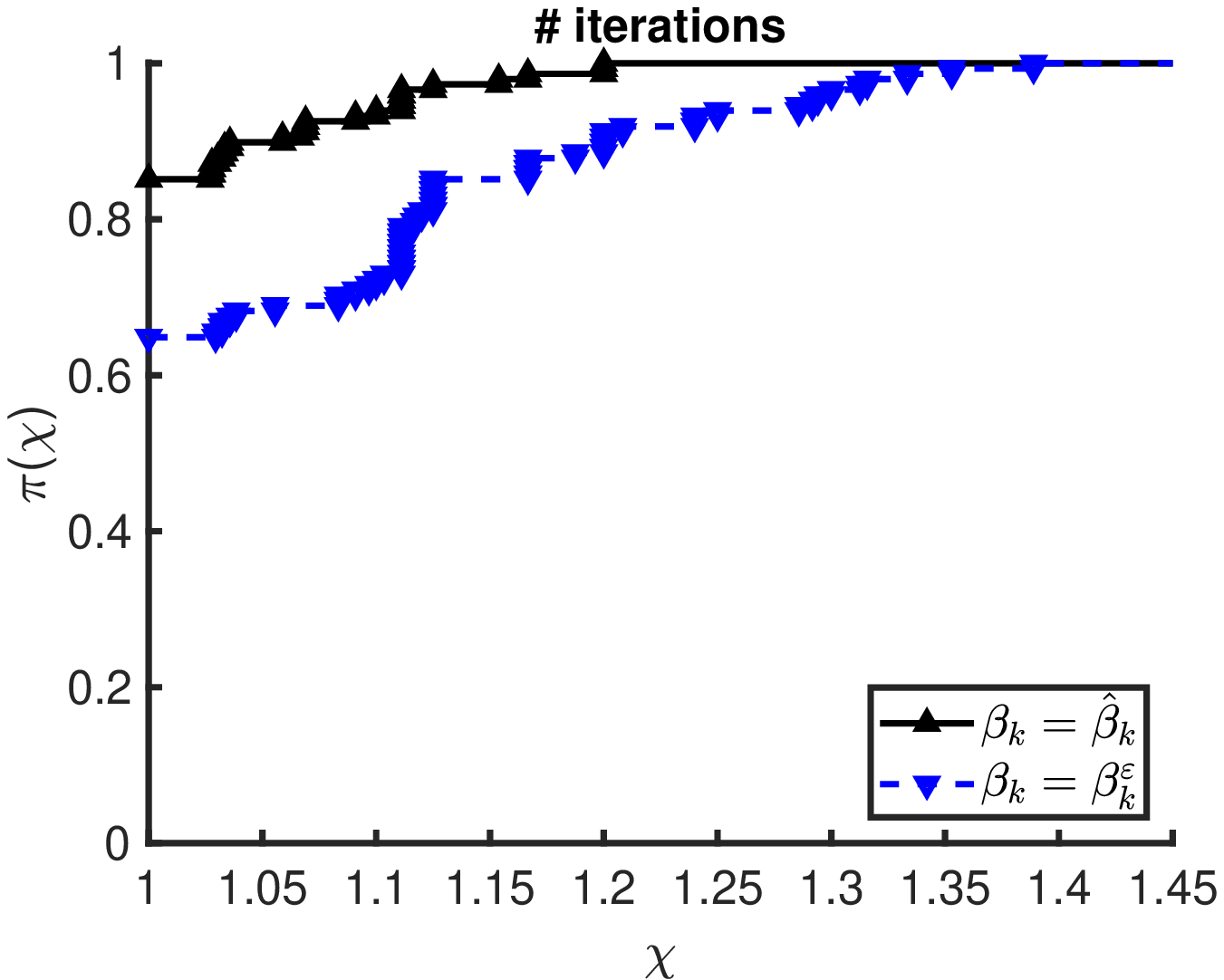}\includegraphics[width=0.5\textwidth]{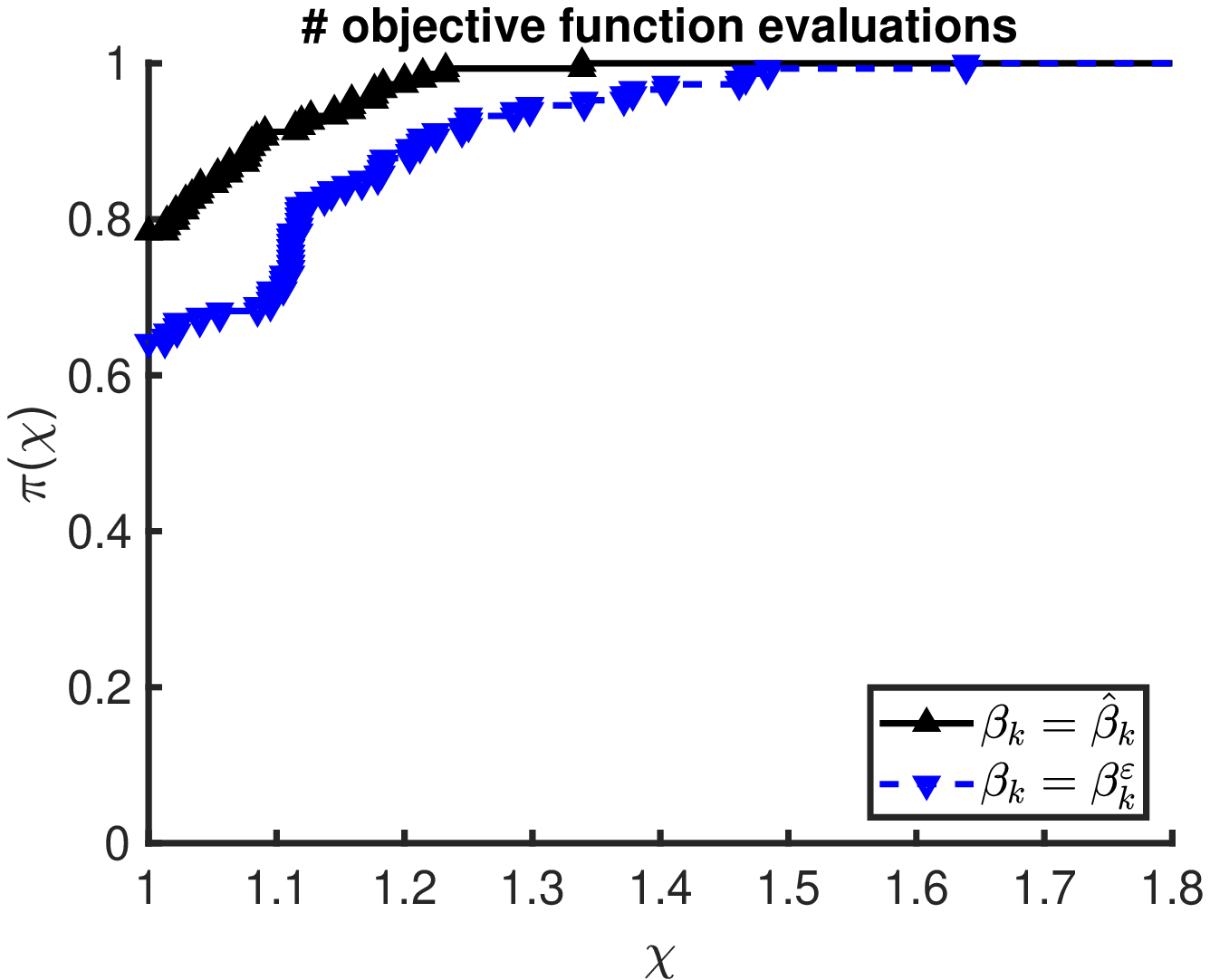}
        \vskip -9pt
        \caption{Performance profiles of SDG[BFGS,0.5], with $ \beta_k= \beta_k^\varepsilon$ and $\beta_k=\hat{\beta}_k $, on the solution of the 148 nonconvex problem instances in which the two algorithms reached the same solution.\label{fig:optimalVSsuboptimal_ALL_bfgs}}
    \end{figure}
    A similar analysis was carried out for the convex problems from machine learning,
    comparing the versions SDG[BFGS,0.5] with the two different values of $\beta_k$. Again, $\beta_k=\hat{\beta}_k$ seems
    to provide the best results, in terms of both number of iterations and number of function evaluations, as shown
    in Figure~\ref{fig:optimalVSsuboptimal_class_05}. Therefore, we decided to set $\beta_k=\hat{\beta}_k$ in the remaining
    numerical experiments. 
    \begin{figure}[h!]
        \centering
        \includegraphics[width=0.5\textwidth]{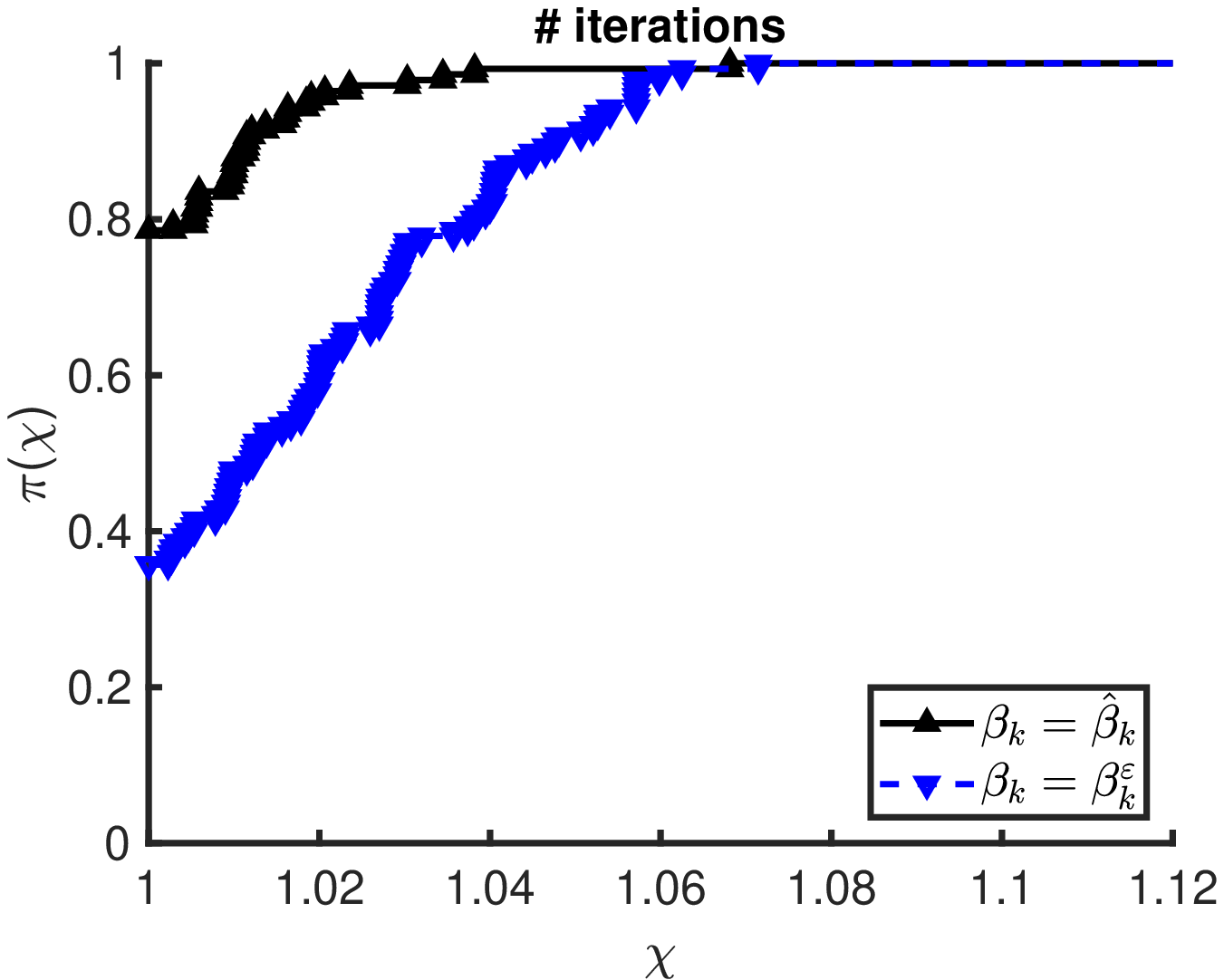}\includegraphics[width=0.5\textwidth]{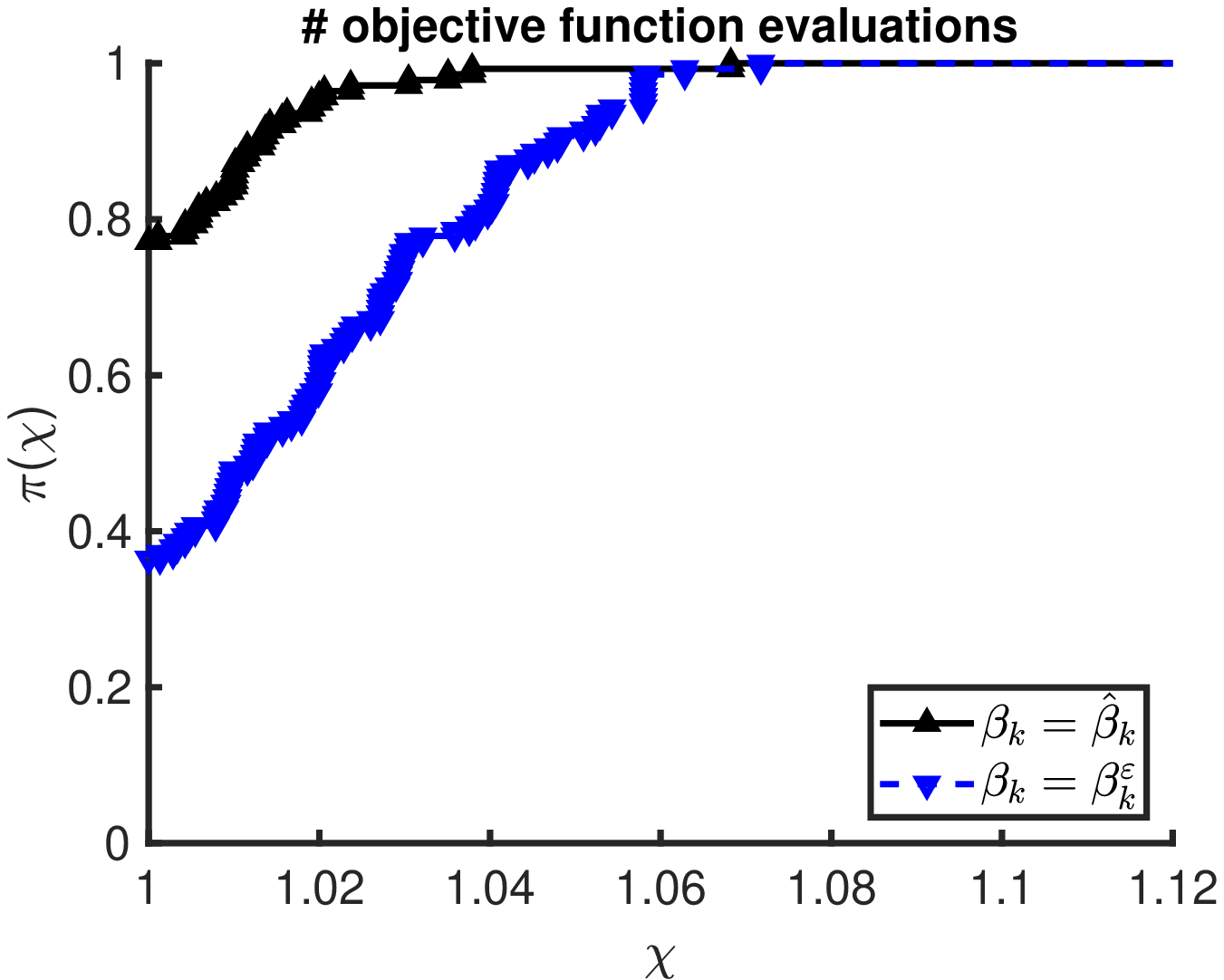}
        \vskip -9pt
        \caption{Performance profiles of SDG[BFGS,0.5], with $ \beta_k= \beta_k^\varepsilon$ and $\beta_k=\hat{\beta}_k $, on the solution of the 140 convex problem instances.\label{fig:optimalVSsuboptimal_class_05}}
    \end{figure}

    \subsubsection{Comparison with other globalization strategies \label{sec:numerical_res_Mod_Chol}}
    
    To perform a comparison with other globalization strategies,
    we ran, on the nonconvex problems, the SDG[Newton,0.5] method and an MN method
    based on the modified Cholesky factorization GMW-II~\cite{fang_08} (see \url{https://github.com/hrfang/mchol}).
    For completeness, we also ran Newton's method.
    
    We found that Newton's method with line search stopped without satisfying criterion~\eqref{eqn:stopping_criterion}
    for 168 out of 360 problem instances.  Conversely, MN failed only on 10 instances, whereas SDG[Newton,0.5]
    was always able to satisfy~\eqref{eqn:stopping_criterion} within 2000 iterations.
    Figure~\ref{fig:nonconvex_FINAL_newton} summarizes the results of the comparison between SDG[Newton,0.5]
    and the Modified Newton's method for the 134 problem instances in which the methods obtained the same solution.
    The profiles show that our algorithm required less function evaluations, although it was slightly less efficient in terms of iterations.
    \begin{figure}[h!]
        \centering
        \includegraphics[width=0.5\textwidth]{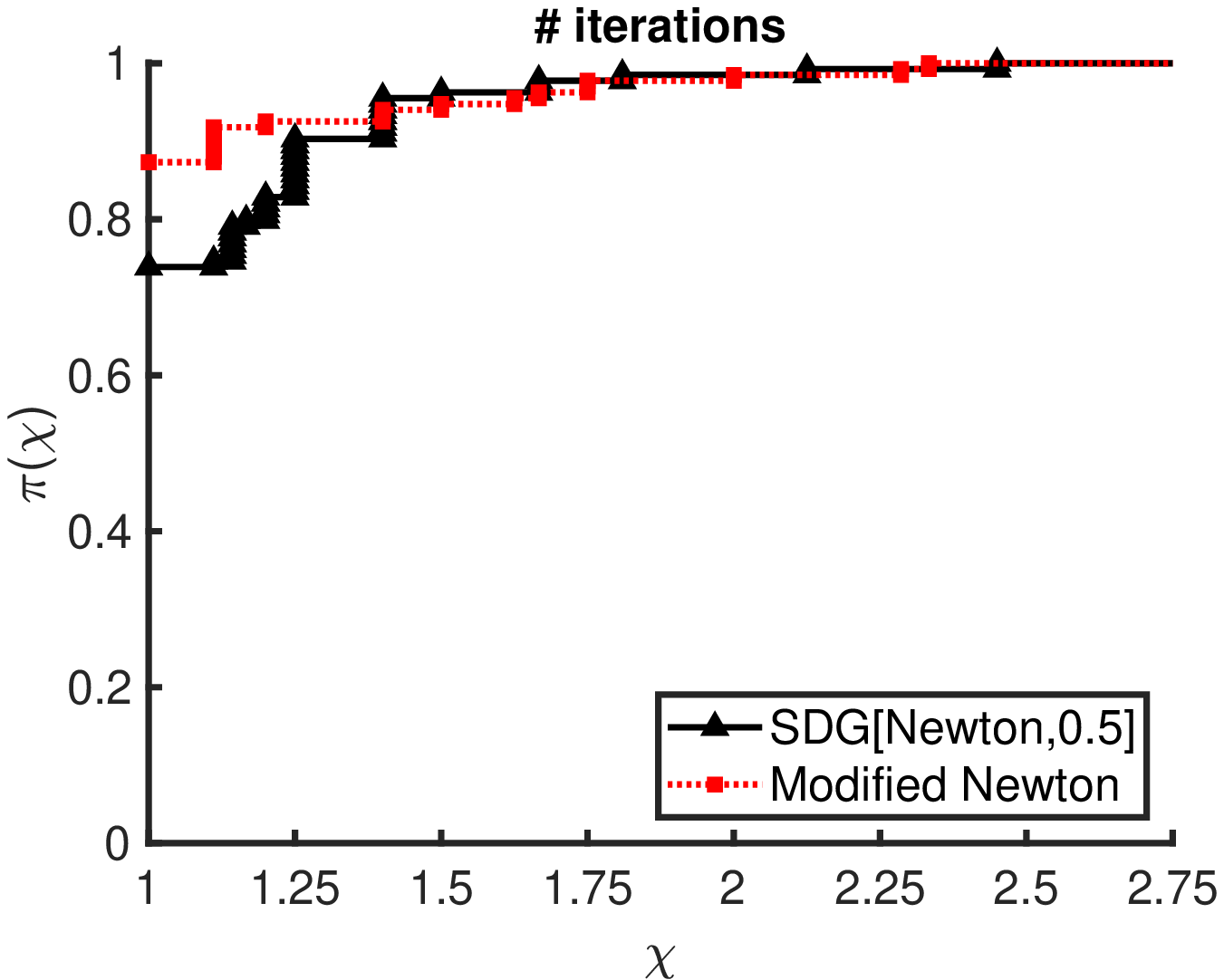}\includegraphics[width=0.5\textwidth]{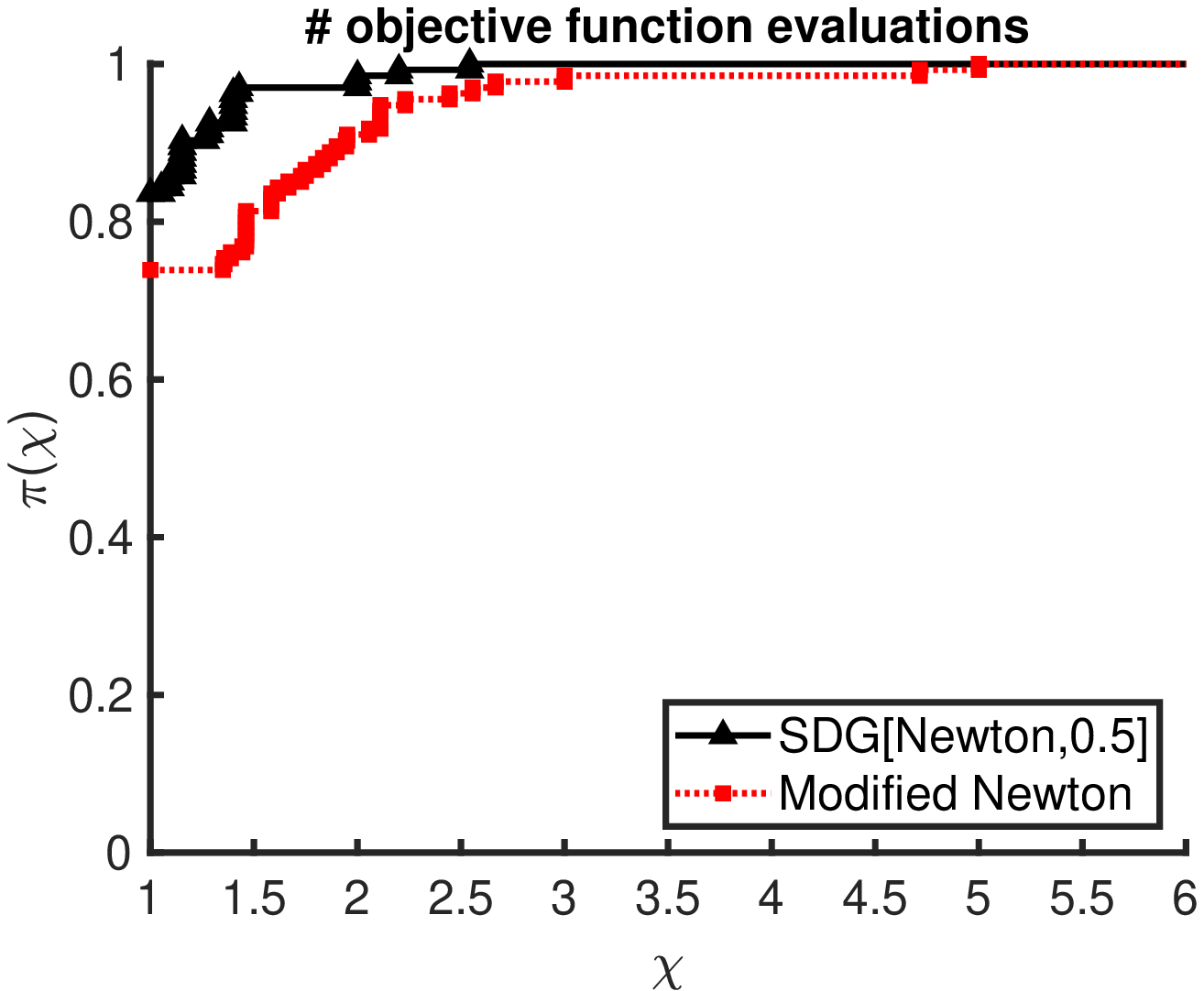}
        \vskip -9pt
        \caption{Performance profiles of SDG[Newton,0.5] and the Modified Newton's method on the solution of the 134 nonconvex problem instances in which they computed the same solution.\label{fig:nonconvex_FINAL_newton}}
    \end{figure}
    
    We also compared SDG[BFGS,0.5] with CBFGS using $ \upsilon=1 $ (see~\eqref{CBFGS}) and with BFGS.
    The performance profiles in Figure~\ref{fig:nonconvex_FINAL_bfgs} show how SDG[BFGS,0.5] compares with CBFGS and BFGS
    in the solution of the 221 problem instances in which the algorithms get the same solution.
    Note that CBFGS and BFGS overlap extensively. In other words, BFGS does not seem to really need
    a globalization strategy, and the cautious update rule~\eqref{CBFGS} is likely to reduce to the standard BFGS
    update rule almost always. Figure~\ref{fig:nonconvex_FINAL_bfgs} also shows that our globalization strategy can slightly
    improve the performance of the BFGS method.
    \begin{figure}[h!]
        \centering
        \includegraphics[width=0.5\textwidth]{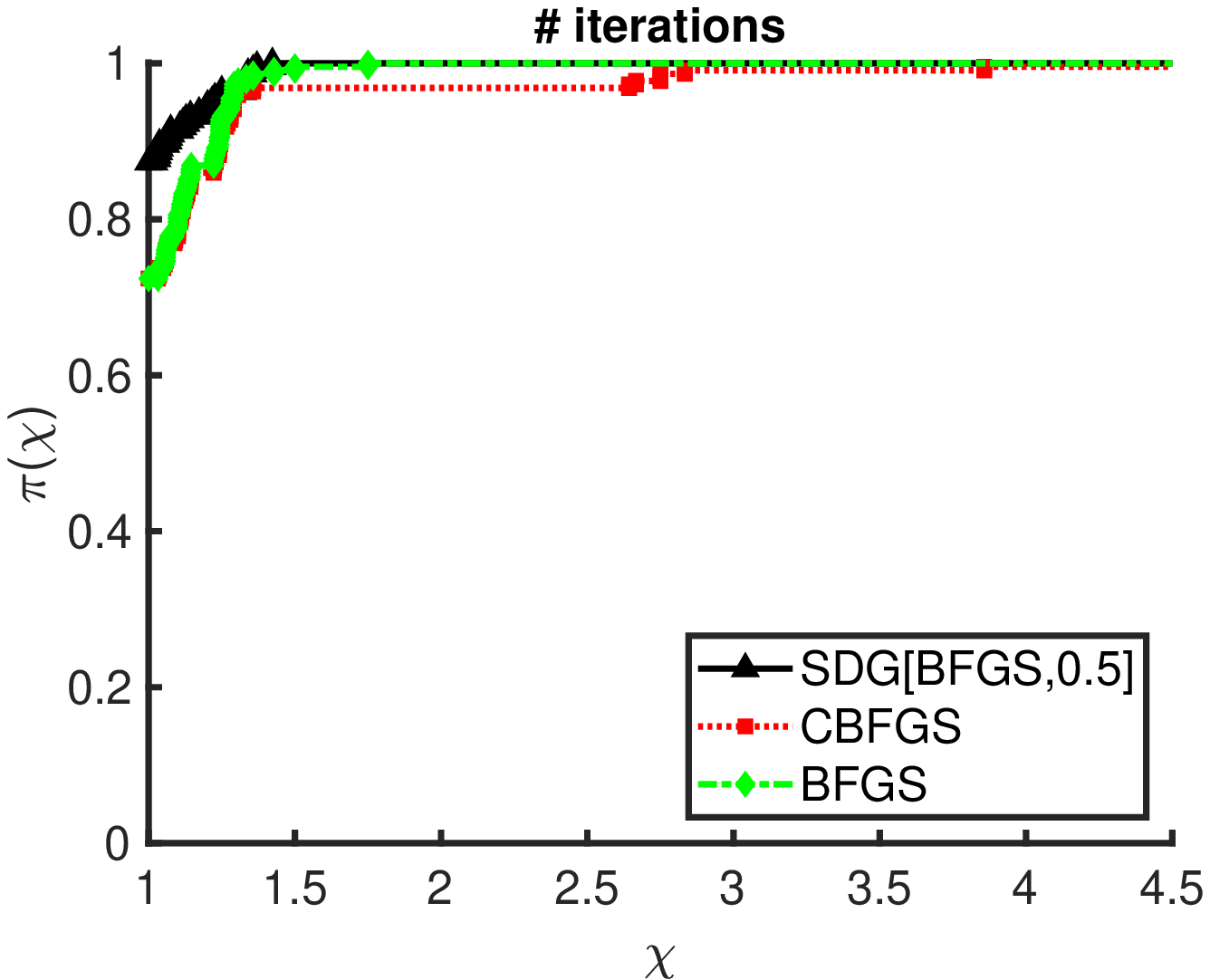}\includegraphics[width=0.5\textwidth]{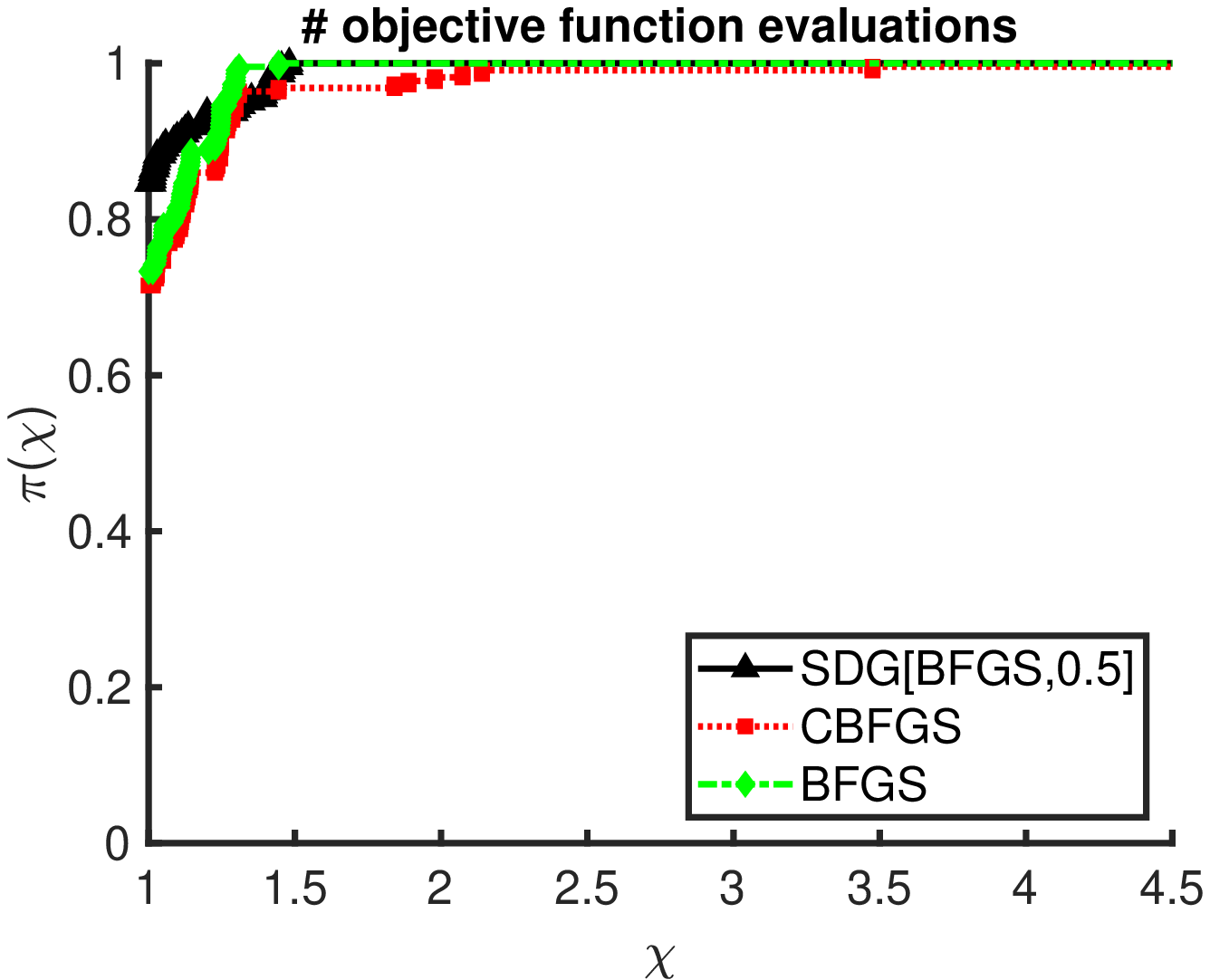}
        \vskip -9pt
        \caption{Performance profiles of SDG[BFGS,0.5], CBFGS and BFGS in the solution of the 221 nonconvex problem instances in which the algorithms computed the same solution.\label{fig:nonconvex_FINAL_bfgs}}
    \end{figure} 
    
    With the aim of better understanding the behavior of SDG[BFGS,0.5],
    in Figure~\ref{fig:betaplot} we plotted the sequence $\{\beta_k \}$ for two representative instances of nonconvex problems.
    We set $\beta_k =1$ when the BFGS direction was accepted (see lines~7-8 of Algorithm~\ref{SDG_implemented}), and $\beta_k = 0$ when
    the SD direction scaled by the BB2 step length was selected (see lines~12-13 of Algorithm~\ref{SDG_implemented}).
    \begin{figure}[h!]
        \centering
        \includegraphics[width=0.9\textwidth]{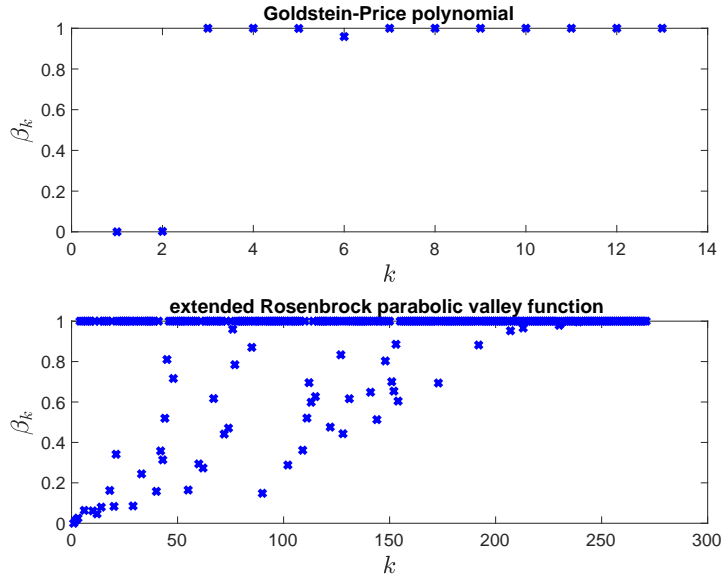}
        \vskip -18pt
        \caption{Values of $ \beta_k $ used by SDG[BFGS,0.5] in the solution of two selected nonconvex problems. \label{fig:betaplot}}
    \end{figure}
    In the top plot, concerning a very easy problem (the Goldstein-Price polynomial), we see that the method practically switches from
    SD to BFGS at the third iteration. The bottom plot, concerning the extended Rosenbrock parabolic valley function, shows that
    in the very first iterations it is likely that $\beta_k$ is close to 0, and the SD component is dominating the
    search direction~\eqref{new_scaled_gradient}. As the number of iterations increases, the SD component in~\eqref{new_scaled_gradient}
    becomes smaller and smaller, and eventually the method reduces to BFGS.
    
    Concerning the convex problems, we considered BFGS only, i.e., we compared
    SDG[BFGS,0.5], CBFGS and BFGS. Figure~\ref{fig:final_bfgs_class_05} confirms the trend already observed
    in the nonconvex case: the CBFGS and BFGS methods behave the same way, and SDG[BFGS,0.5] outperforms both of them.
    \begin{figure}[h!]
        \centering
        \includegraphics[width=0.5\textwidth]{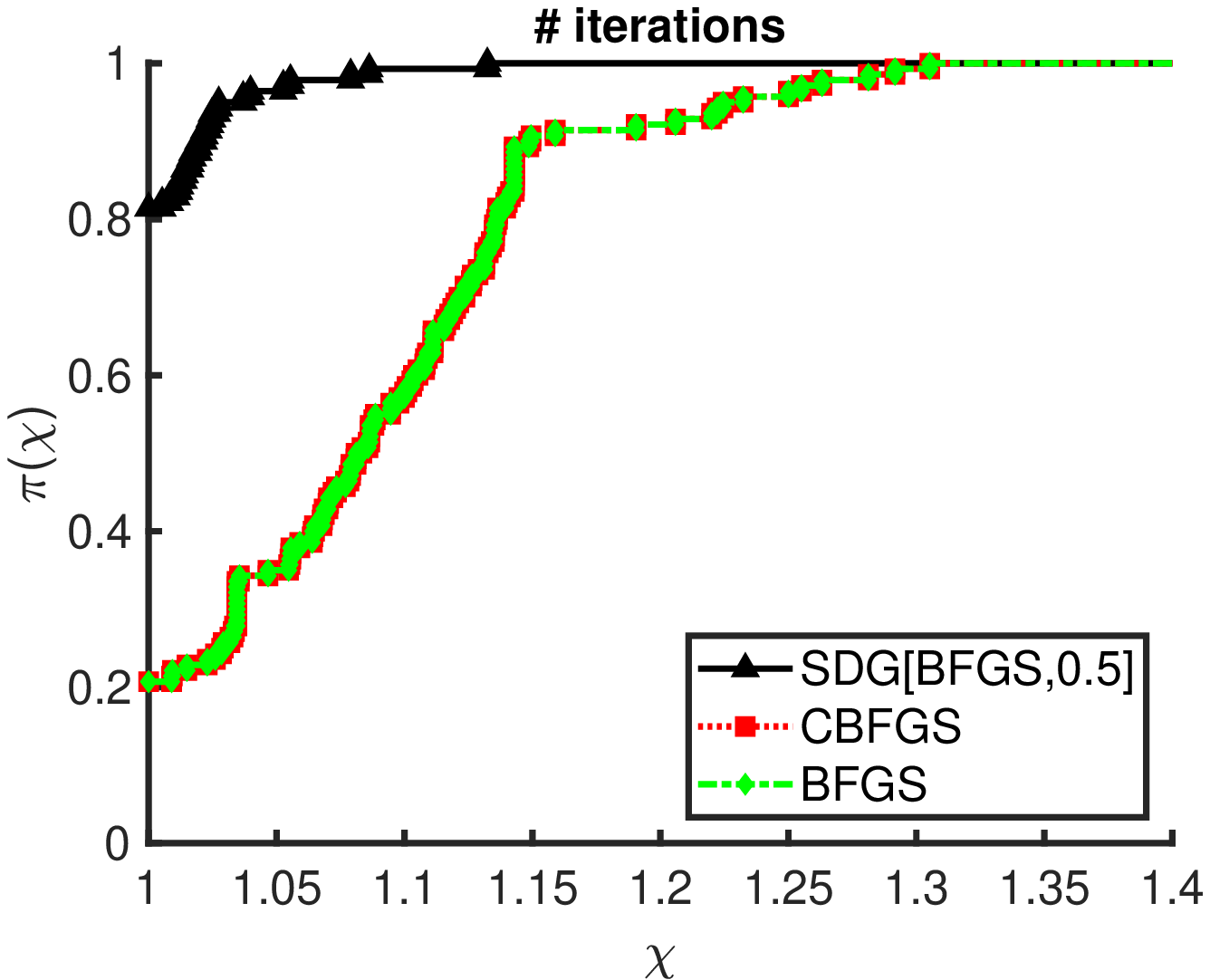}\includegraphics[width=0.5\textwidth]{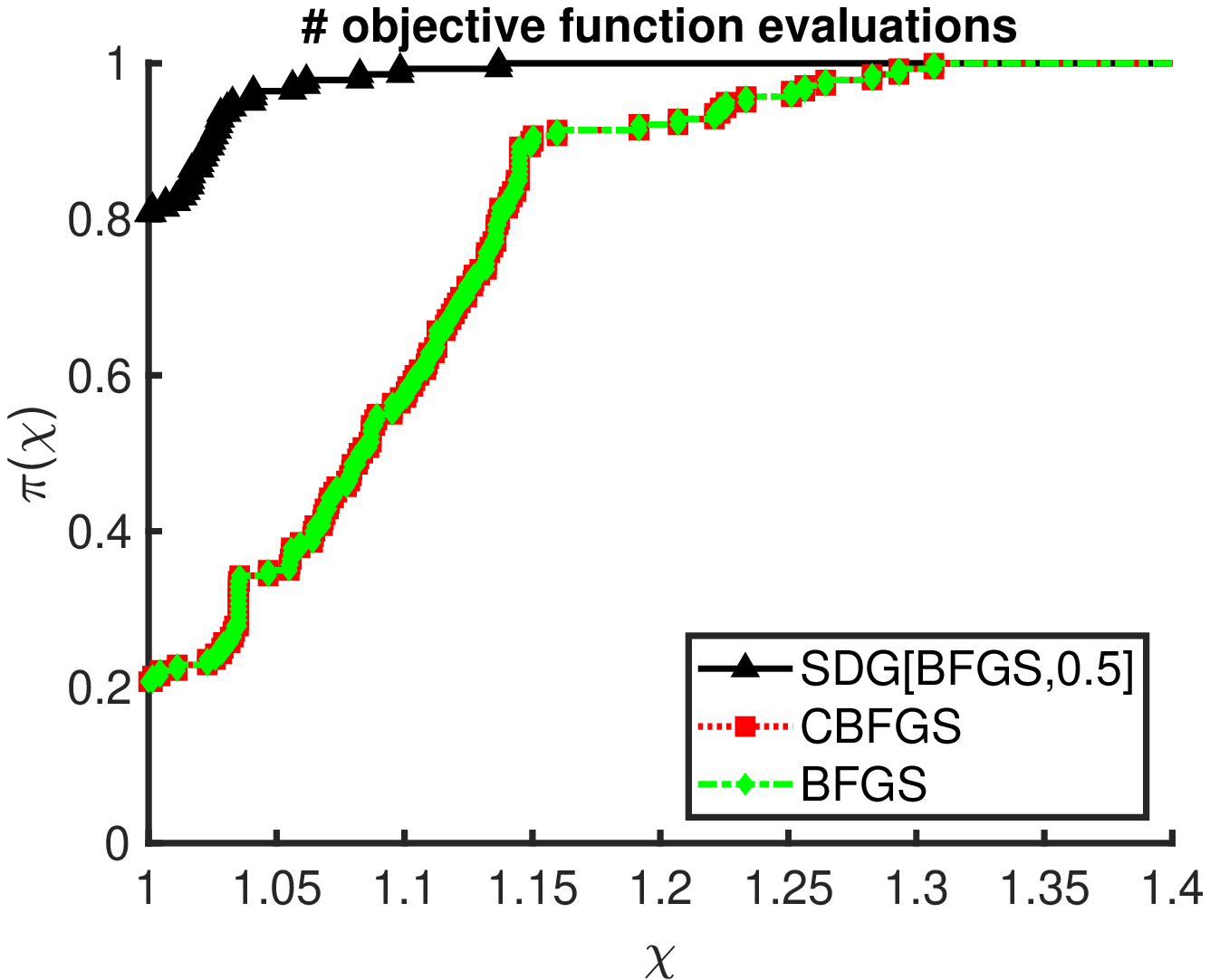}
        \vskip -9pt
        \caption{Performance profiles of SDG[BFGS,0.5], CBFGS and BFGS in the solution of the 140 convex problem instances.\label{fig:final_bfgs_class_05}}
    \end{figure}
    
    The results in the convex case suggest that a suitable linear combination of an NT direction with the SD one
    can have a beneficial effect in speeding up the convergence, in addition to providing global convergence.
    To further investigate this issue, we also made
    computational experiments with SDG[BFGS,0.9] on the convex test problems. Of course, the choice 
    $\varepsilon_0=0.9$ favors the SD component in the search direction, and we cannot suggest it as a safe choice in general.
    However, the comparison with SDG[BFGS,0.5] in Figure~\ref{fig:bfgs_09_vs_05} shows that for the selected problems
    SDG[BFGS, 0.9] is more efficient than SDG[BFGS,0.5],and hence than the standard BFGS.
    \begin{figure}[h!]
        \centering
        \includegraphics[width=0.5\textwidth]{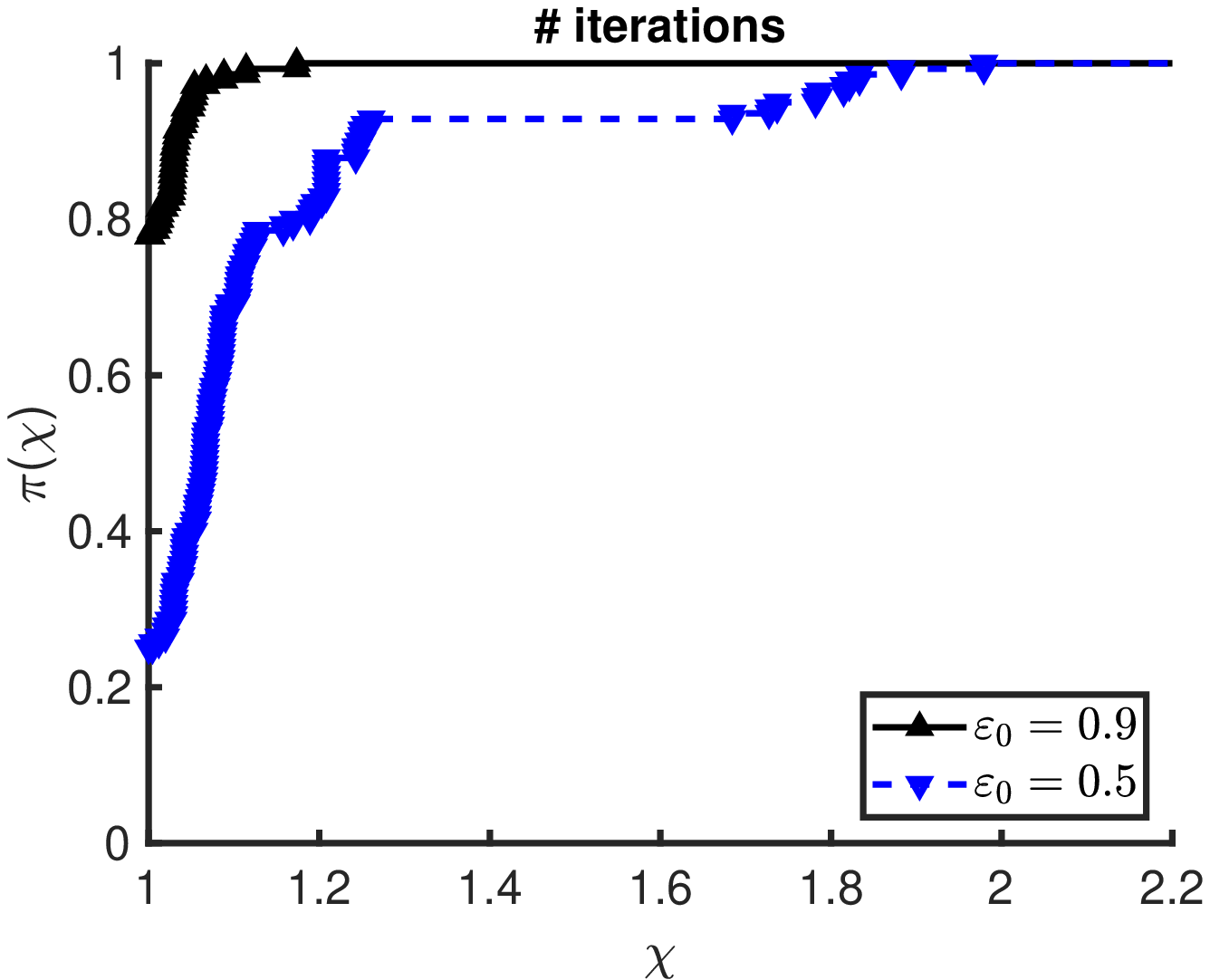}\includegraphics[width=0.5\textwidth]{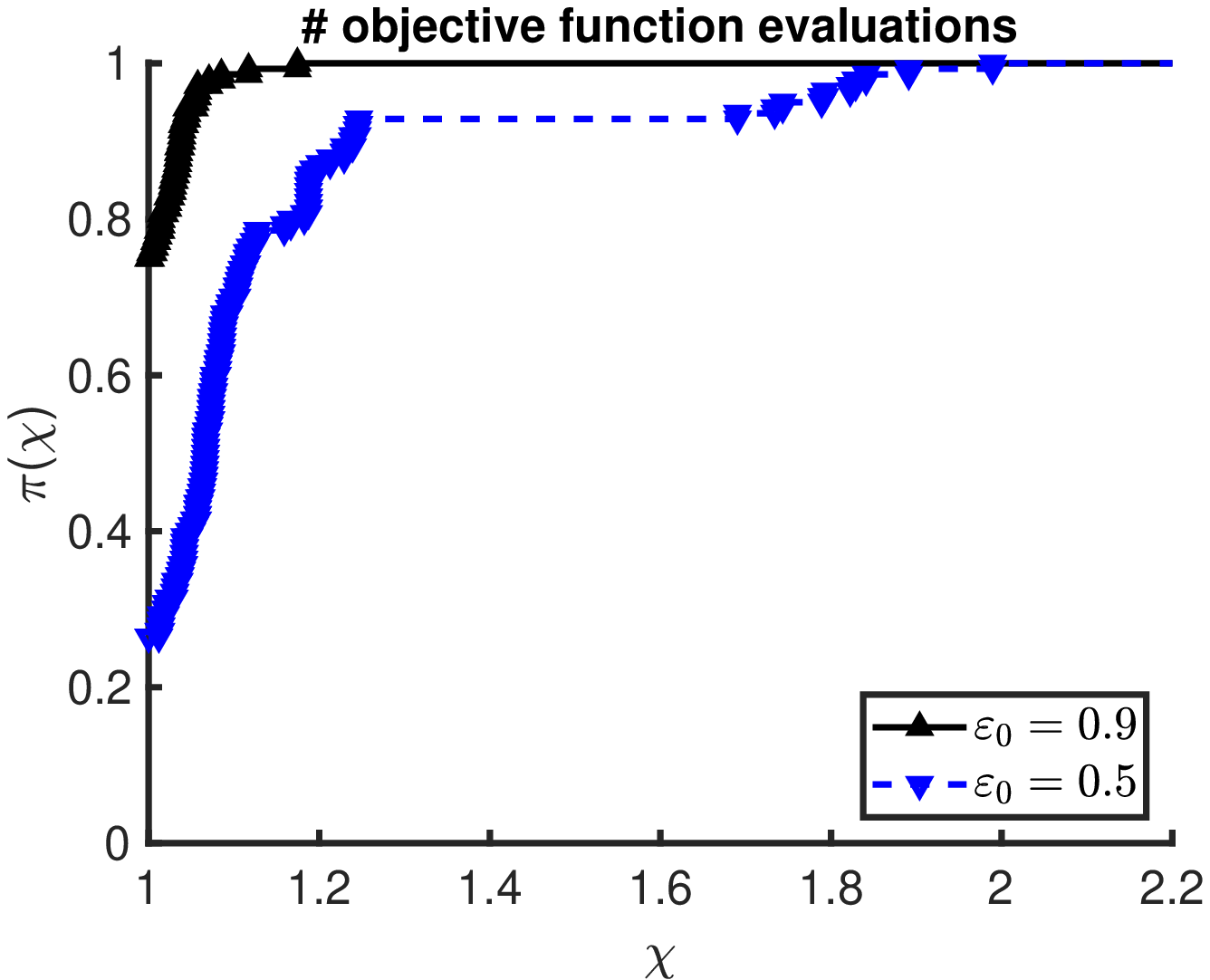}
        \vskip -9pt
        \caption{Performance profiles of SDG[BFGS,0.9] and SDG[BFGS,0.5] in the solution of the 140 convex problem instances.\label{fig:bfgs_09_vs_05}}
    \end{figure}
    \begin{figure}[h!]
        \centering
        \includegraphics[width=0.5\textwidth]{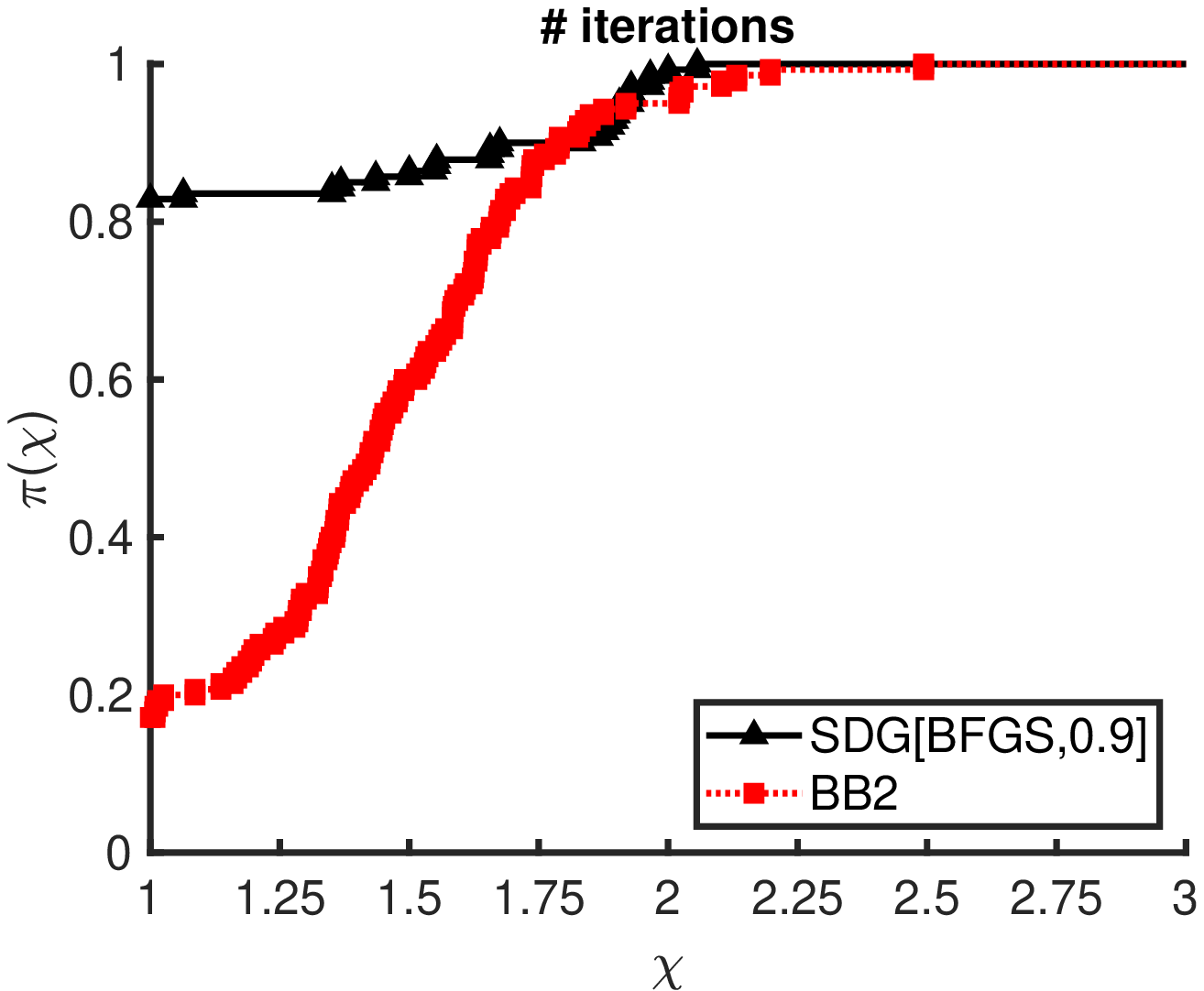}\includegraphics[width=0.5\textwidth]{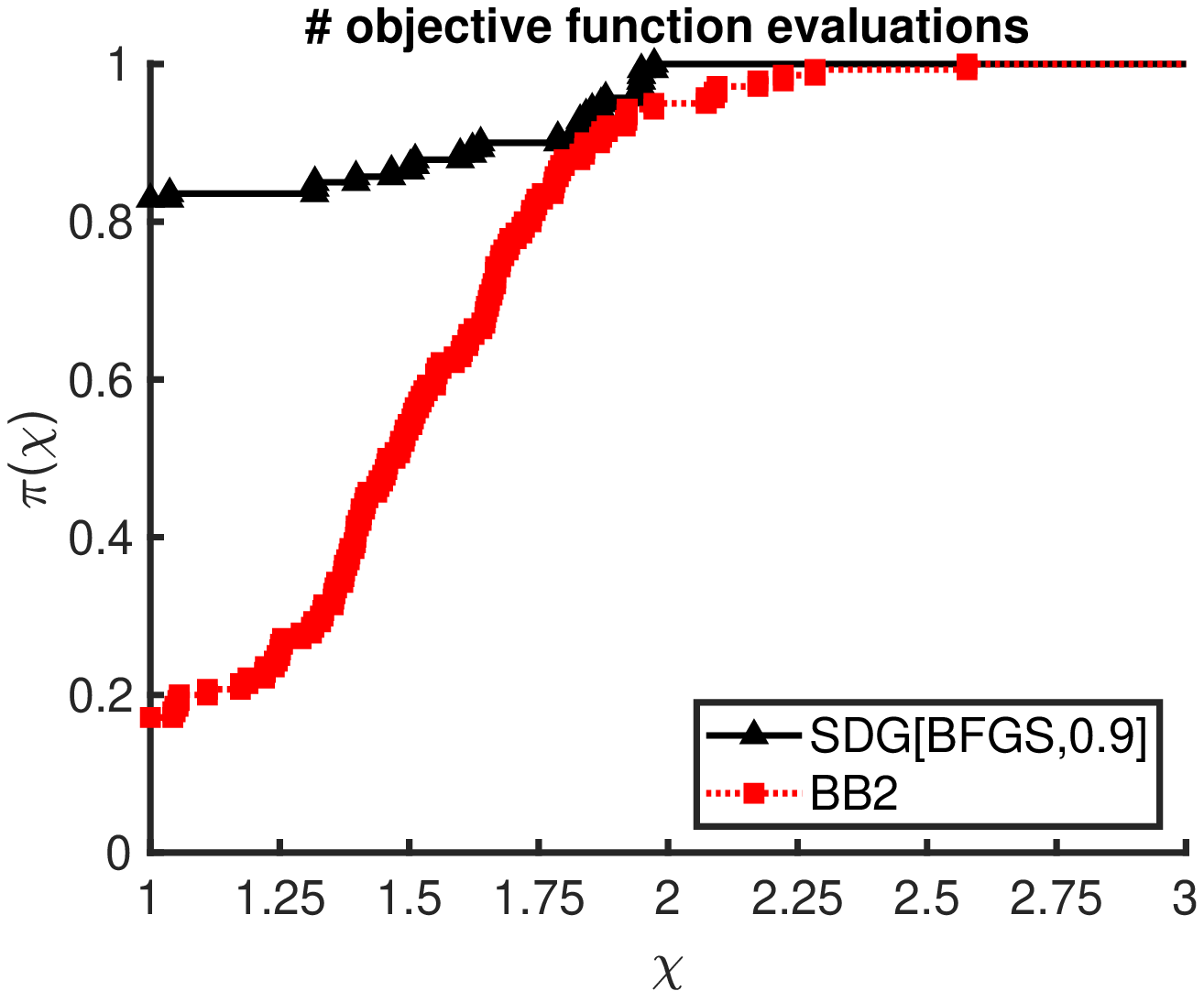}
        \vskip -9pt
        \caption{Performance profiles of SDG[BFGS,0.9] and SD with BB2 step length on the solution of the 140 convex problem instances. \label{fig:final_bfgs_class_09}}
    \end{figure}
    \begin{figure}[h!]
        \centering
        \includegraphics[width=0.5\textwidth]{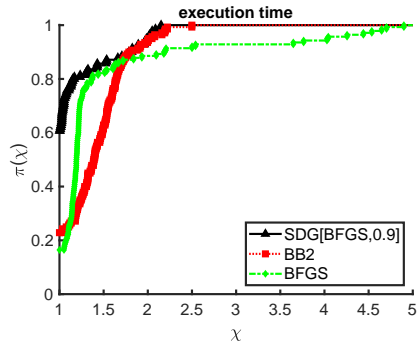}
        \vskip -9pt
        \caption{Performance profiles (execution time) of SDG[BFGS,0.9], BFGS and SD with BB2 step length on the solution of the 140 convex problem instances. \label{fig:final_bfgs_class_09_time}}
    \end{figure}
    
    Finally, the performance profiles in Figure~\ref{fig:final_bfgs_class_09} show that SDG[BFGS,0.9] generally outperforms
    the SD method with BB2 step length. This suggests that the good behavior of SDG[BFGS,0.9] does not depend only on the use
    of SD directions with effective step lengths, but also on the efficient combination of these directions with BFGS ones.
    This is confirmed by Figure~\ref{fig:final_bfgs_class_09_time}, where SDG[BFGS,0.9], BFGS and the SD method with BB2 step length are compared in terms of execution time, showing that the proposed algorithm
    is more efficient than the others.
    
    \section{Conclusions\label{sec:conclusions}}
    
    We proposed a globalization strategy to be used with any NT method, which is based on a linear combination
    of the NT and SD search directions.
    Our approach, which generalizes the one proposed in~\cite{shi_1996,shi_2000},
    looks easier and more flexible than globalization strategies which have been devised ad hoc for specific methods~\cite{fang_08,li_01a}.
    We believe that a key issue in our strategy is to take the SD direction with a suitable step length. The reason is twofold:
    first, from the theoretical point of view, it allows us to have search directions that are invariant to the scaling of the objective function;
    second, it allows us to inject in the globalized method the proven effectiveness of gradient methods based on particular step-length
    rules~\cite{diserafino_2018}.
    Our computational experiments suggest that the use of a line search along a suitable
    linear combination of NT and SD directions can improve numerical performance with respect to the NT method, in addition to providing
    global convergence. In particular, the SD component with the BB2 step length showed a beneficial effect especially when far from the solution.
    
    
    \section*{Acknowledgments}
    This work was supported by GNCS-INdAM, Italy, and by the V:ALERE Program of the University of Campania ``L. Vanvitelli'' under the VAIN-HOPES Project.


\end{document}